\newcommand\numberthis{\addtocounter{equation}{1}\tag{\theequation}}
\title{\LARGE \bf
Prevailing against Adversarial Noncentral Disturbances: \\ Exact Recovery of Linear Systems with the $l_1$-norm Estimator}
\author{Jihun Kim and Javad Lavaei
\thanks{Jihun Kim and Javad Lavaei are with the Department of Industrial Engineering and Operations Research, University of California, Berkeley. Emails: \{jihun.kim, lavaei\}@berkeley.edu}%
\thanks{This work was supported by the U. S. Army Research Laboratory and the U. S. Army Research Office under Grant W911NF2010219, Office of Naval Research under Grant N000142412673, AFOSR, NSF, and the UC Noyce Initiative.}
}
\theoremstyle{plain} 
\newtheorem{theorem}{Theorem}
\newtheorem{lemma}{Lemma}
\theoremstyle{plain} 
\newtheorem{definition}{Definition}
\newtheorem{assumption}{Assumption}
\theoremstyle{plain}
\newtheorem{example}{Example}
\newtheorem{remark}{Remark}
\DeclareMathOperator*{\argmin}{arg\,min}
\begin{document}

\maketitle
\thispagestyle{empty}
\pagestyle{empty}

\begin{abstract}
This paper studies the linear system identification problem in the general case where the disturbance is sub-Gaussian, correlated, and possibly adversarial. First, we consider the case with noncentral (nonzero-mean) disturbances for which the ordinary least-squares (OLS) method fails to correctly identify the system. We prove that the $l_1$-norm estimator accurately identifies the system under the condition that each disturbance has equal probabilities of being positive or negative. This condition restricts the sign of each disturbance but allows its magnitude to be arbitrary. Second, we consider the case where each disturbance is adversarial with the model that the attack times happen occasionally but the distributions of the attack values are arbitrary. We show that when the probability of having an attack at a given time is less than 0.5 and each attack spans the entire space in expectation, the $l_1$-norm estimator prevails against any adversarial noncentral disturbances and the exact recovery is achieved within a finite time. These results pave the way to effectively defend against arbitrarily large noncentral attacks in safety-critical systems.
\end{abstract}

\section{Introduction}\label{intro}
The system identification aims to identify the unknown parameters that govern an underlying dynamical system, given the history of states determined by the true parameters and the disturbances. In real-world applications, one may not be able to model the system accurately, and therefore it is vital to use the collected online data to identify the system. The identification is often followed by the control of the system, and a small identification error improves the quality of the control and affects the stability and optimality of adaptive control strategies \cite{kumar1986stochastic, ioannou2012robust}. 
In this paper, we focus on the \textit{linear system identification}, where the states are determined by $x_{t+1} = A^* x_t + w_t, ~t=0, \dots, T-1$, and the goal is to find 
the true unknown matrix $A^*$ using the state trajectory, despite the presence of disturbances $w_t$.

The challenge of this problem is that the states are correlated with each other, even when the disturbances are assumed to be independent and identically distributed (i.i.d.). 
Early papers have shown that the ordinary least-squares (OLS) estimator $\argmin_{A}\sum_{t=0}^{T-1}\|x_{t+1}-Ax_t\|_2^2$ overcomes this correlation and achieves the consistency and the convergence towards the true matrix \cite{ljung1976ols, moore1978strong, lai1982least}. Recently, instead of the asymptotic properties, several studies have focused on non-asymptotic analyses and provided an
error rate of $O\bigr(\frac{1}{\sqrt{T}}\bigr)$ between the true matrix and OLS estimates  under i.i.d. zero-mean Gaussian and sub-Gaussian disturbances after finite samples are collected \cite{simchowitz2018learningwithout, jedra2020finite, sarkar2021finite}. 
These results were extended to use the OLS estimates for the linear quadratic control with unknown matrices, in a way that 
the estimates are treated as the true matrices after a finite time, achieving $O(\sqrt{T})$ regret \cite{mania2019certainty, simchowitz2020naive}.
While OLS is indeed a statistically optimal estimator in such i.i.d. cases, little is known about the system identification when the disturbances are correlated or possibly selected adversarially based on the past information. 

Recently, in the machine learning literature, the online control problem with any bounded adversarial disturbance, called online nonstochastic control, has gained considerable attention, among which \cite{hazan2020nonstochastic, simchowitz2020improper, chen2021blackbox} leveraged the OLS method to recover the matrix and attained $O(T^{2/3})$ regret. However, this regret implies a substantial cumulative loss, leading to an impractical algorithm for real-world applications. To address more applicable situations, \cite{feng2021adv, yalcin2023exact, zhang2024exact} assumed that adversarial disturbances occur with probability less than 1, and recovered the true matrix with a non-smooth $l_2$-norm estimator defined by $\argmin_{A}\sum_{t=0}^{T-1}\|x_{t+1}-Ax_t\|_2$. In this case, practicality is achieved since the regret does not grow after the exact recovery. However, they assumed that each nonzero adversarial disturbance has a zero mean, which overlooks the possibility of systemic bias in the disruptions.

In this paper, we allow adversarial noncentral (nonzero-mean) disturbances and identify the unknown matrix within a finite time by using the $l_1$-norm estimator, given by $\argmin_{A}\sum_{t=0}^{T-1}\|x_{t+1}-Ax_t\|_1$. 
Our work presents the first result in the literature for the exact retrieval of the true matrix even in the presence of adversarial noncentral disturbances. Under the assumptions that the probability of having an attack at a given time is $p\in(0,1)$ (see Assumption \ref{null}) and each attack covers the entire space in expectation (see Assumption \ref{nondg}), we construct two scenarios of the adversarial disturbance structure: 
\begin{enumerate}[leftmargin=1.75em]
    \item symmetric disturbance around zero \textit{scaled} by a random vector (see Assumption \ref{advnon}) with $0<p<1$, 
    \item \textit{any} adversarial noncentral disturbance with $0<p<0.5$. 
\end{enumerate}
The first scenario is applicable to the domain where positive and negative disturbances occur with equal chance, each of which can be scaled arbitrarily. For example, beliefs based on behavioral bias may cause fluctuations in the stock market \cite{lo2004adaptive} or energy demand \cite{werth2021energy}, which can be explained by disturbances of arbitrary magnitude.
A more practical situation arises in the second scenario: it applies when an extremely large attack with a nonzero mean occasionally affects the system, such as natural disaster on power systems \cite{buldyrev2010cat, wang2016power, yan2020power}, unanticipated malicious cyber attacks \cite{pasqualetti2013cyber, duo2022cyber}, and others. 
In this paper, we will show that the $l_1$-norm estimator is indeed a robust estimator capable of successfully prevailing over such disturbances. 
Note that while the paper focuses on autonomous linear systems, it can be readily generalized to linear systems with inputs as well as nonlinear systems linearly parameterized by basis functions. These types of generalization can be achieved by adopting the formulation and techniques studied in \cite{yalcin2023exact,zhang2024exact}, but this will be left as future work due to space restrictions.

The paper is organized as follows. In Sections \ref{Prelim} and \ref{sec: prob formulation}, we introduce the preliminaries and formulate the problem, respectively. In Section \ref{exact recovery}, we study the two scenarios of adversarial noncentral disturbances and show that the exact recovery is achieved within finite time. In Section \ref{sec: experiments}, we present numerical experiments that support our main results.
Finally, concluding remarks are provided in Section \ref{sec: conclusion}.

\textbf{Notation.} Let $\mathbb{R}^d$ denote the set of $d$-dimensional vectors and $\mathbb{R}^{d\times d}$ denote the set of $d\times d$ matrices. For a matrix $A$, $\|A\|_2$ denotes the operator norm and $\|A\|_F$ denotes the Frobenius norm of the matrix. 
For square matrices $A$ and $B$, $A\succeq B$ means that $A-B$ is positive semidefinite. 
For a vector $x$, $\|x\|_1$ denotes the $l_1$-norm of the vector and $\|x\|_2$ denotes the $l_2$-norm of the vector. $x^T$ denotes the transpose of the vector $x$. 
For two vectors $x$ and $y$ of the same dimensions, let $x\circ y$ denote entry-wise product of the same dimensions as $x$ and $y$. 
For a scalar $z\neq 0$, $\text{sgn}(z)=1$ if $z>0$ and $\text{sgn}(z)=-1$ if $z<0$. 
Let $\mathbb{E}$ denote the expectation operator.  For the event $\mathcal{E}$, let $\mathbb{P}(\mathcal{E})$ denote the probability of the event. 
For the set $S$, let $|S|$ denote the cardinality of the set.
We use $\Theta(\cdot)$ for the big-$\Theta$ notation, and $\tilde{\Theta}(\cdot)$ for the big-$\Theta$ notation hiding logarithmic factors. Finally, let $\mathbb{S}^{d-1}$ denote the set $\{y\in\mathbb{R}^d : \|y\|_2=1\}$.

\section{Preliminaries}\label{Prelim}

In this work, we consider each disturbance vector to follow sub-Gaussian distribution. We use the definition of sub-Gaussian given in \cite{vershynin2018high}. 

\begin{definition}[sub-Gaussian scalar variables]\label{subgdef1}
    A random variable $w\in \mathbb{R}$ is called sub-Gaussian if there exists $c>0$ such that 
    \begin{equation}\label{subG}
    \mathbb{E}\Bigr[\exp\Bigr(\frac{w^2}{c^2}\Bigr)\Bigr]\leq 2.
    \end{equation}
    Its sub-Gaussian norm is denoted by $\|w\|_{\psi_2}$ and defined as 
    \begin{equation}\label{norm}
       \|w\|_{\psi_2} = \inf\left\{c>0: \mathbb{E}\Bigr[\exp\Bigr(\frac{w^2}{c^2}\Bigr)\Bigr]\leq 2\right\}. 
    \end{equation}
\end{definition}


Note that the $\psi_2$-norm satisfies all properties of norm, including positive definiteness, homogeneity, and triangle inequality. 
We have the following useful property of the moments of any sub-Gaussian variable $w\in \mathbb{R}$:
    \begin{equation}\label{moment}
        \mathbb{E}[|w|^p]^{1/p}\leq 3 \sqrt{p}\cdot \|w\|_{\psi_2}, \quad \forall p\geq 1.
    \end{equation}  
    
 We also introduce below an equivalent definition in the case where $\mathbb{E}[w]=0$.
\begin{definition}[sub-Gaussian scalar variables (MGF)]\label{mgfdef}
A random variable $w\in \mathbb{R}$ with $\mathbb{E}[w]=0$ is called sub-Gaussian if there exists $\eta_w >0 $ such that the moment generating function (MGF) of $w$ satisfies
    \begin{equation}\label{mgf}
        \mathbb{E}[\exp(\theta w)]\leq \exp(\theta^2 \eta_w^2 )
    \end{equation}
    for all $\theta\in\mathbb{R}$. For a sub-Gaussian variable that satisfies \eqref{subG} and $\mathbb{E}[w]=0$, \eqref{mgf} is satisfied with $\eta_w \leq 6\sqrt{e}\|w\|_{\psi_2}$ \cite{vershynin2018high}.
\end{definition}

Real-world systems have a limit on their actuators so they cannot accept arbitrarily large inputs. This enables us to assume that each disturbance follows sub-Gaussian variables of which the tail event rarely occurs. 
The following lemma presents Hoeffding's inequality \cite{vershynin2018high}, providing sharp bounds on the tail event for both Definitions \ref{subgdef1} and \ref{mgfdef}.

\begin{lemma}[Hoeffding's inequality]\label{hoeffding}
Suppose that a random variable $w$ satisfies \eqref{subG} with its sub-Gaussian norm defined in \eqref{norm}. Then, for all $s>0$, we have
\begin{equation}\label{ghoeff}
    \mathbb{P}(|w|\geq s) \leq 2\exp\Bigr(-\frac{s^2}{\|w\|_{\psi_2}^2}\Bigr). 
\end{equation}
In addition, if a random variable $w$ has zero mean and satisfies \eqref{mgf} with parameter $\eta_w$, then for all $s>0$, we have
    \begin{subequations}
    \begin{align}
        & \mathbb{P}(w\geq s) \leq \exp\Bigr(-\frac{s^2}{(2\eta_w)^2}\Bigr),\label{plus}
        \\ & \mathbb{P}(w\leq -s) \leq \exp\Bigr(-\frac{s^2}{(2\eta_w)^2}\Bigr).\label{minus}
    \end{align}
    \end{subequations}
\end{lemma}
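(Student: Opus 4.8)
The plan is to establish both parts of the lemma through the exponential Markov inequality (the Chernoff bound), which converts a bound on an exponential transform of $w$ into a bound on its tail probabilities.

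For the first part, I would apply Markov's inequality to the nonnegative random variable $\exp(w^2/c^2)$. Since the event $\{|w|\geq s\}$ coincides with $\{\exp(w^2/c^2)\geq \exp(s^2/c^2)\}$, Markov's inequality together with the defining property \eqref{subG} yields
\[
\mathbb{P}(|w|\geq s)\leq \frac{\mathbb{E}[\exp(w^2/c^2)]}{\exp(s^2/c^2)}\leq 2\exp\left(-\frac{s^2}{c^2}\right).
\]
Taking $c=\|w\|_{\psi_2}$ then gives the stated bound, provided one verifies that $\mathbb{E}[\exp(w^2/c^2)]\leq 2$ continues to hold at the infimum $c=\|w\|_{\psi_2}$ and not merely for $c$ strictly larger. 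This is settled by a monotone-convergence argument: as $c\downarrow\|w\|_{\psi_2}$ the integrand $\exp(w^2/c^2)$ increases to $\exp(w^2/\|w\|_{\psi_2}^2)$, so the limiting expectation remains at most $2$.

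For the second part, I would invoke the Chernoff bound with a free parameter $\lambda>0$. Writing $\mathbb{P}(w\geq s)=\mathbb{P}(\exp(\lambda w)\geq \exp(\lambda s))$ and combining Markov's inequality with the MGF bound \eqref{mgf} gives
\[
\mathbb{P}(w\geq s)\leq \exp\left(\lambda^2\eta_w^2-\lambda s\right).
\]
The exponent is a convex quadratic in $\lambda$ minimized at $\lambda^\ast = s/(2\eta_w^2)$, and substituting this value produces the exponent $-s^2/(4\eta_w^2)=-s^2/(2\eta_w)^2$, which is precisely \eqref{plus}. To obtain the lower tail \eqref{minus}, I would run the identical argument on $-w$: because \eqref{mgf} is assumed for all $\lambda\in\mathbb{R}$, the variable $-w$ satisfies the same MGF bound, and the identity $\mathbb{P}(w\leq -s)=\mathbb{P}(-w\geq s)$ transfers the bound directly.

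Both arguments are routine applications of the Chernoff method, so I do not anticipate a genuine obstacle. The only technical subtlety lies in the first part, where the bound must be justified at the infimum $c=\|w\|_{\psi_2}$ rather than only for larger $c$; the limiting argument resolves this. The optimization over $\lambda$ in the second part is elementary calculus, and the two-sided conclusion follows immediately from the symmetry of the MGF hypothesis in $\lambda$.
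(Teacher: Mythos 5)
Your proof is correct, and it follows the standard Markov/Chernoff argument that the paper itself relies on: the paper does not prove this lemma but imports it directly from the cited reference \cite{vershynin2018high}, where exactly this method (Markov's inequality applied to $\exp(w^2/c^2)$ for the two-sided bound, and the optimized Chernoff bound with $\lambda^\ast = s/(2\eta_w^2)$ for the one-sided bounds) is used. Your attention to the attainment of the infimum in the definition of $\|w\|_{\psi_2}$ via monotone convergence is a valid and welcome refinement of that standard argument.
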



To provide the analysis of high-dimensional systems, we provide the definition of multi-dimensional sub-Gaussian variables given in \cite{vershynin2018high}. 

\begin{definition}[sub-Gaussian vector variables]\label{subgvec}
    A random vector $w\in\mathbb{R}^d$ is called sub-Gaussian if for every $x\in\mathbb{R}^d$, $w^T x$ is a sub-Gaussian scalar variable defined in Definition \ref{subgdef1}. Its norm is defined as
    \begin{equation}\label{normdef}
        \|w\|_{\psi_2} = \sup_{\|x\|_2 \leq 1, x\in\mathbb{R}^d}\|w^T x\|_{\psi_2}. 
    \end{equation}
\end{definition}


Now, the following lemma provides the connection between the $\psi_2$-norm and the operator norm. 

\begin{lemma}\label{uppsi}
    For any matrix $A\in \mathbb{R}^{d_1 \times d_2}$ and any sub-Gaussian vector $x \in \mathbb{R}^{d_2}$, we have $\|Ax\|_{\psi_2} \leq \|A\|_2 \|x\|_{\psi_2}$.
\end{lemma}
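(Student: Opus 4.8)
The plan is to unfold the vector sub-Gaussian norm of $Ax$ through its defining supremum in Definition \ref{subgvec}, transfer the supremizing direction onto the matrix via the identity $(Ax)^T y = (A^T y)^T x$, and then reduce everything to the scalar $\psi_2$-norm of a one-dimensional marginal of $x$, which is precisely what $\|x\|_{\psi_2}$ controls.

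First I would write, directly from Definition \ref{subgvec} applied to the vector $Ax \in \mathbb{R}^{d_1}$,
$$\|Ax\|_{\psi_2} = \sup_{\|y\|_2 \le 1,\, y \in \mathbb{R}^{d_1}} \|(Ax)^T y\|_{\psi_2} = \sup_{\|y\|_2 \le 1} \|(A^T y)^T x\|_{\psi_2}.$$
I would then fix any such $y$ and set $z = A^T y \in \mathbb{R}^{d_2}$. If $z = 0$ the inner term vanishes and the desired bound is trivial, so I assume $z \neq 0$ and write $z = \|z\|_2\, \hat z$ with $\hat z = z/\|z\|_2 \in \mathbb{S}^{d_2-1}$. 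Using the homogeneity of the scalar $\psi_2$-norm,
$$\|z^T x\|_{\psi_2} = \|z\|_2 \, \|\hat z^T x\|_{\psi_2} \le \|z\|_2 \sup_{\|u\|_2 \le 1} \|u^T x\|_{\psi_2} = \|z\|_2 \, \|x\|_{\psi_2},$$
where the final equality is exactly the definition of $\|x\|_{\psi_2}$ from Definition \ref{subgvec}.

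It then remains to bound the scalar factor $\|z\|_2 = \|A^T y\|_2$. Here I would simply note that $\|A^T y\|_2 \le \|A^T\|_2 \|y\|_2 = \|A\|_2 \|y\|_2 \le \|A\|_2$, using $\|y\|_2 \le 1$ together with the invariance of the operator norm under transposition. Combining the two displays and taking the supremum over all $y$ with $\|y\|_2 \le 1$ yields $\|Ax\|_{\psi_2} \le \|A\|_2 \|x\|_{\psi_2}$, as claimed.

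I do not anticipate a genuinely hard step: the argument is a routine manipulation of the defining supremum. The only points requiring slight care are the reduction to a unit vector—since the definition of $\|x\|_{\psi_2}$ ranges over the unit ball, one must factor out $\|z\|_2$ via homogeneity (and dispose of the degenerate case $z = 0$ separately) before invoking that definition—and the bookkeeping that $\|A^T\|_2 = \|A\|_2$, so that the resulting constant is expressed in terms of $\|A\|_2$ rather than $\|A^T\|_2$.
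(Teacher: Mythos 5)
Your proof is correct and takes essentially the same route as the paper's: both unfold the defining supremum of $\|Ax\|_{\psi_2}$, transfer the direction via $(Ax)^T y = (A^T y)^T x$, bound $\|A^T y\|_2 \le \|A\|_2$, and finish with the homogeneity of the scalar $\psi_2$-norm. The only cosmetic difference is that you normalize each fixed $z = A^T y$ individually (carefully handling $z=0$), whereas the paper enlarges the supremum to the ball $\{z : \|z\|_2 \le \|A\|_2\}$ and rescales that ball to the unit ball.
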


\begin{proof}
Given a matrix $A$ and a vector $x$, we have
    \begin{align*}
     \|Ax\|_{\psi_2} &= \sup_{\|y\|_2 \leq 1, y\in\mathbb{R}^{d_1}} \|x^T A^T y\|_{\psi_2} \\&\leq \sup_{\|A^T y\|_2\leq \|A\|_2, y\in\mathbb{R}^{d_1}} \|x^T A^T y\|_{\psi_2} \\ &\leq \sup_{\|z\|_2\leq \|A\|_2, z\in\mathbb{R}^{d_2}} \|x^T z\|_{\psi_2}
    \\&= \sup_{\|\tilde{z}\|_2\leq 1, \tilde{z}\in\mathbb{R}^{d_2}} \Bigr\| \|A\|_2 \cdot x^T \tilde{z}\Bigr\|_{\psi_2}=\|A\|_2 \cdot \|x\|_{\psi_2},
\end{align*}
where the first inequality comes from the relationship 
\[
\|y\|_2 \leq 1 ~\Rightarrow~ \|A^T y\|_2\leq \|A\|_2 \|y\|_2 \leq \|A\|_2
\]
and the last equality is due to the homogeneity of the $\psi_2$-norm. 
\end{proof}


\section{Problem Formulation} \label{sec: prob formulation}

Consider a general linear time-invariant dynamical system with additive disturbance
\begin{equation}\label{sysd}
    x_{t+1} = A^* x_t + w_t, \quad t=0,1,\dots,T-1,
\end{equation}
where $A^*\in\mathbb{R}^{d\times d}$ is the unknown true matrix, $x_t\in\mathbb{R}^d$ is the state, and $w_t\in \mathbb{R}^d$ is a disturbance injected into the system at time $t$. Our goal is to identify the true matrix $A^*$, given the state trajectory $x_0, \dots, x_T$. We assume that $A^*$ has bounded operator norm and each $x_0, w_0, \dots, w_{T-1}$ is sub-Gaussian to prevent an unbounded growth of the system.  
We formally present the relevant assumptions 
below. 

\begin{assumption}[Operator norm]\label{stability}
    It holds that $\|A^*\|_2 <1$ (this condition is somewhat stronger than stability).
\end{assumption}

\begin{assumption}[Maximum sub-Gaussian norm]\label{maxnorm}
    Define a filtration $\mathcal{F}_t = \bm{\sigma}\{x_0, w_0, \dots, w_{t-1}\}$. There exists $\sigma_w >0$ such that $\|x_0\|_{\psi_2} \leq \sigma_w$ and 
    $\|w_t\|_{\psi_2}\leq \sigma_w$ conditioned on $\mathcal{F}_t$ for all $t\geq 0$ and $\mathcal{F}_t$.
\end{assumption}

Our major advancement over previous works is to address two main challenges simultaneously: the disturbances can be adversarial rather than independent, and each of them may also have a nonzero mean. 
This allows $w_t$ to be
\textit{adversarial noncentral disturbances}, 
meaning that an adversary can design the disturbance $w_t$ based on the previous information $\mathcal{F}_t$, and $\mathbb{E}[w_t~|~\mathcal{F}_t]$ is not necessarily zero. 

Our goal is to construct an estimator that achieves a finite-time exact recovery, meaning that the error between the true matrix and the estimate should be exactly zero after a finite time. 
However, it is well-established that the lower bound of the error is $\Theta\bigr(\frac{1}{\sqrt{T}}\bigr)$ if disturbances follow i.i.d. zero-mean Gaussian distribution \cite{simchowitz2018learningwithout}. Evidently, this lower bound $\Theta\bigr(\frac{1}{\sqrt{T}}\bigr)$ does not allow exact recovery, and thus we introduce the following further assumption also adopted by \cite{feng2021adv, yalcin2023exact, zhang2024exact}.
\begin{assumption}[Probabilistic Attack]\label{null}
    The disturbance has an attack probability $0<p<1$ conditioned on $\mathcal{F}_t$, meaning that 
    \begin{equation}
        \mathbb{P}(w_t \equiv 0~|~ \mathcal{F}_t) =1-p 
    \end{equation}
    holds for all 
    $t\geq 0$ and $\mathcal{F}_t$, where $\equiv$ means that the two sides are identically equal. We also define the attack time set as $\mathcal{K}_T = \{0\leq t \leq T-1: w_t \text{~is not identically zero} \}$.
\end{assumption}

Note that Assumption \ref{null} implies that the system is not under attack at time $t$ with a positive probability $1-p$. We further require a similar assumption presented in \cite{zhang2024exact};
When the system is under attack, no adversarial disturbance can deceive the system to the extent that the distribution of the next state fails to span the entire state space, thereby ensuring sufficient exploration of the state trajectory. Remark \ref{roleattack} provides more details on justifying such role of attacks.
\begin{assumption}[Non-degeneracy]\label{nondg}
There exists $\lambda>0$ such that for every $x\in \mathbb{R}^d$, it holds that
\begin{align}
    &\mathbb{E}[(x+w_t)(x+w_t)^T~|~ \mathcal{F}_t]\succeq \lambda^2 I_d,
\end{align}
for all $t\in\mathcal{K}_T$ and $\mathcal{F}_t$, where $I_d$ is the $d\times d$ identity matrix. 
\end{assumption}

Now, given a state trajectory $x_0,\dots, x_{T}$, we consider the following $l_1$-norm estimator at time $T$:
   \begin{equation}\label{l1est}
      \min_{A\in \mathbb{R}^{d\times d}} \sum_{t=0}^{T-1}\|x_{t+1}-Ax_t\|_1.
   \end{equation}
    We will show that $l_1$-estimator successfully overcomes adversarial noncentral distributions and achieves a finite-time exact recovery. We formally define this notion below. 
\begin{definition}[Finite-time Exact Recovery]\label{fer}
    Let $S_t$ denote a set of solutions to the estimator based on $x_0, x_1, \dots, x_t$.
    The estimator is said to achieve finite-time exact recovery if for all $\delta \in (0,1]$, there exists $t_\delta>0$ such that 
    \begin{equation}
        t\geq t_\delta  ~\Rightarrow~  S_t = \{A^*\}
    \end{equation}
    with probability at least $1-\delta$.
\end{definition}

Note that Definition \ref{fer} implies that $A^*$ should be the unique solution to the estimator to achieve the exact recovery.







\section{Main Results} \label{exact recovery}

In this section, under proper assumptions, we will show that the $l_1$-norm estimator achieves the  finite-time exact recovery defined in Definition \ref{fer}.
Let $\hat{A}_T$ denote any estimate obtained from \eqref{l1est}, which can be equivalently written as 
\begin{equation}\label{1no}
    \hat{A}_T \in \argmin_{A\in \mathbb{R}^{d\times d}} \sum_{t=0}^{T-1}\|(A^*-A)x_t+w_t\|_1
\end{equation}
due to the system dynamics \eqref{sysd}. In the next subsection, we first elucidate the exact recovery conditions of the $l_1$-norm estimator.

\subsection{Conditions for the Exact Recovery}\label{4a}

In this subsection, we provide sufficient conditions for the exact recovery. For the following theorem, 
let $w_{t}^i$ denote the $i^\text{th}$ entry of $w_t$.
Given $y\in \mathbb{R}^d$, define the random variables $z_{t}^i(y)$ as follows:
    \begin{align}\label{zpl}
z_{t}^i(y)=\begin{dcases*}
|y^T x_t|, & if $w_{t}^i=0$, \\
y^T x_t\cdot \text{sgn}(w_{t}^i), & otherwise.
\end{dcases*}
\end{align}

\begin{theorem}\label{optim}
    $A^*$ is the unique solution to the $l_1$-norm estimator \eqref{l1est} at time $T$ if
    \begin{align}
            &\sum_{t=0}^{T-1} z_{t}^{i}(y) > 0, \quad \forall y\in \mathbb{S}^{d-1}\label{znotationa}
        \end{align}
  holds for all $i\in \{1,\dots,d\}$.
\end{theorem}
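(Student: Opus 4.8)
The plan is to show directly that the objective $f(A) = \sum_{t=0}^{T-1}\|(A^*-A)x_t + w_t\|_1$ appearing in \eqref{1no} satisfies $f(A) > f(A^*)$ for every $A\neq A^*$; since $f(A^*)=\sum_{t=0}^{T-1}\|w_t\|_1$, this is precisely the assertion that $A^*$ is the unique minimizer. First I would set $\Delta = A^* - A$ (so that $\Delta\neq 0$) and expand the $l_1$-norm entrywise, which decouples the problem across the rows of $\Delta$. Writing $\delta_i^T$ for the $i$-th row of $\Delta$, so that the $i$-th entry of $\Delta x_t$ is $\delta_i^T x_t$, this yields
\[
f(A) - f(A^*) = \sum_{i=1}^d\; \sum_{t=0}^{T-1}\Bigl(|\delta_i^T x_t + w_t^i| - |w_t^i|\Bigr),
\]
so it suffices to control each inner sum $g_i(\delta_i) = \sum_{t=0}^{T-1}\bigl(|\delta_i^T x_t + w_t^i| - |w_t^i|\bigr)$ separately.

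The key step is a pointwise lower bound linking each summand to the random variable $z_t^i$ of \eqref{zpl}. Using the convexity of $|\cdot|$ in the form of the subgradient inequality $|b+a| - |b| \geq \mathrm{sgn}(b)\,a$ for $b\neq 0$, together with the exact identity for $b=0$, I would verify case by case on the sign of $w_t^i$ that, for any $\delta_i\neq 0$ with $y = \delta_i/\|\delta_i\|_2 \in \mathbb{S}^{d-1}$,
\[
|\delta_i^T x_t + w_t^i| - |w_t^i| \;\geq\; \|\delta_i\|_2\, z_t^i(y).
\]
Indeed, when $w_t^i = 0$ both sides equal $|\delta_i^T x_t|$; when $w_t^i > 0$ the left side is at least $\delta_i^T x_t = \|\delta_i\|_2\, y^T x_t$; and when $w_t^i < 0$ it is at least $-\delta_i^T x_t = -\|\delta_i\|_2\, y^T x_t$, which matches the definition of $z_t^i(y)$ in each of the three branches.

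Summing this bound over $t$ gives $g_i(\delta_i) \geq \|\delta_i\|_2 \sum_{t=0}^{T-1} z_t^i(y)$, and the hypothesis \eqref{znotationa} makes the right-hand side strictly positive whenever $\delta_i\neq 0$; for $\delta_i = 0$ one trivially has $g_i(0)=0$. Since $\Delta\neq 0$ forces at least one row $\delta_i$ to be nonzero, summing the $g_i$ over $i\in\{1,\dots,d\}$ yields $f(A)-f(A^*) > 0$, which proves uniqueness. I expect the only delicate point to be the sign bookkeeping in the pointwise bound: one must exploit the homogeneity in $\delta_i$ to reduce to the unit sphere so that the quantified hypothesis \eqref{znotationa} applies, and in the $w_t^i = 0$ branch one must keep the full $|\delta_i^T x_t|$ (rather than fixing a single subgradient) so that the inequality remains tight enough to carry the strict positivity through to the final sum.
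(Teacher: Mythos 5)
Your proof is correct, but it reaches the conclusion by a genuinely different mechanism than the paper. The paper argues \emph{locally}: it invokes the fact that a strict local minimizer of a convex problem is the unique global minimizer, restricts to perturbations $\|\Delta_i\|_2 \le \epsilon_i$ small enough that $\mathrm{sgn}(\Delta_i^T x_t + w_t^i) = \mathrm{sgn}(w_t^i)$ whenever $w_t^i \neq 0$, and thereby turns the objective difference into an \emph{exact} expression, $\sum_{t:\, w_t^i\neq 0} \Delta_i^T x_t\,\mathrm{sgn}(w_t^i) + \sum_{t:\, w_t^i = 0} |\Delta_i^T x_t|$, whose positivity on the sphere is precisely \eqref{znotationa} after normalizing by $\|\Delta_i\|_2$. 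You instead argue \emph{globally}: the subgradient inequality $|b+a|-|b| \ge \mathrm{sgn}(b)\,a$ gives the one-sided bound
\begin{equation*}
|\delta_i^T x_t + w_t^i| - |w_t^i| \;\ge\; \|\delta_i\|_2\, z_t^i\bigl(\delta_i/\|\delta_i\|_2\bigr)
\end{equation*}
for \emph{every} $\delta_i \neq 0$, not just small ones, so $f(A) > f(A^*)$ for all $A \neq A^*$ follows by summing, with no localization and no appeal to the convex local-to-global principle. Your route buys self-containedness: you never need the ``sufficiently small $\epsilon_i$'' bookkeeping (which in the paper implicitly requires uniformity over the finitely many $t$ with $w_t^i \neq 0$), and convexity enters only through the elementary subgradient inequality for $|\cdot|$. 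What the paper's route buys is sharpness of interpretation: because its local expansion is an equality rather than a bound, it exhibits $\sum_t z_t^i(y)$ as exactly the directional derivative of the objective at $A^*$, which makes clear that condition \eqref{znotationa} is not an artifact of a lossy estimate but essentially the first-order optimality condition itself. Your sign bookkeeping in the three branches of \eqref{zpl} is accurate, and your observation that the $w_t^i = 0$ branch must retain the full $|\delta_i^T x_t|$ (rather than a single fixed subgradient) is exactly the right delicate point.
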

\begin{proof}
The equivalent condition for $A^*$ to be the unique solution of the convex optimization problem \eqref{1no} is the existence of some $\epsilon >0$ such that
\begin{align}\label{diff1}
    \nonumber &\sum_{t=0}^{T-1} \|w_t\|_1 < \sum_{t=0}^{T-1} \|\Delta\cdot x_t + w_t\|_1, \\&\hspace{34mm}\forall \Delta\in\mathbb{R}^{d\times d}: 0<\|\Delta\|_F \leq \epsilon,
\end{align}
  since a strict local minimum in convex problems implies the unique global minimum. 
    A sufficient condition for \eqref{diff1} is to satisfy all coordinate-wise inequalities. That is, if there exist $\epsilon_1, \dots, \epsilon_d >0$ such that
    \begin{equation}\label{diff2}
    \sum_{t=0}^{T-1} |w_t^i| < \sum_{t=0}^{T-1} |\Delta_i^T x_t + w_t^i|, \quad \forall \Delta_i\in\mathbb{R}^{d} : 0<\|\Delta_i\|_2 \leq \epsilon_i
\end{equation}
for all $i\in\{1,\dots,d\}$, then the inequality \eqref{diff1} is satisfied. 
For all $i$, consider a sufficiently small $\epsilon_i >0$. Then, we have
\begin{equation*}
    |\Delta_i^T x_t + w_t^i| = (\Delta_i^T x_t + w_t^i)\cdot \text{sgn}(w_t^i) = \Delta_i^T x_t\cdot  \text{sgn}(w_t^i) + |w_t^i|
\end{equation*}
for $w_t^i \neq 0$. Substituting the above equation into the right-hand side of \eqref{diff2} yields
\begin{align*}
    \sum_{t=0}^{T-1} |w_t^i| < \sum_{\substack{t=0,\\ w_t^i\neq 0}}^{T-1} (\Delta_i^T x_t\cdot  \text{sgn}(w_t^i) + |w_t^i|) +\sum_{\substack{t=0,\\ w_t^i= 0}}^{T-1}  |\Delta_i^T x_t|,
\end{align*}
which is simplified to
\begin{align*}
    0 < \sum_{\substack{t=0,\\ w_t^i\neq 0}}^{T-1} (\Delta_i^T x_t\cdot  \text{sgn}(w_t^i)) +\sum_{\substack{t=0,\\ w_t^i= 0}}^{T-1}  |\Delta_i^T x_t|.
\end{align*}
for all $0<\|\Delta_i\|_2 \leq \epsilon_i$. 
For all $i$, dividing both sides by $\|\Delta_i\|_2 >0$ leads to
the set of inequalities in \eqref{znotationa}.
\end{proof}


To attain the exact recovery, 
it suffices to show that the random variables on the left-hand sides of \eqref{znotationa} are sufficiently positive with high probability. 
In the following two subsections, 
each will present a distinct scenario in which the $l_1$-norm estimator achieves the exact recovery despite the presence of adversarial noncentral disturbances.

\subsection{Exact Recovery under Sign-restricted Disturbances}\label{4b}

In this subsection, we will demonstrate that the exact recovery is achieved under the condition that each disturbance has equal chance of being positive or negative. 
The structure of the disturbance $w_t$ consists of $\alpha_t$ and $\beta_t$, where $\alpha_t$ is a sub-Gaussian vector symmetric around zero and $\beta_t>0$ is a scaling factor. The formal assumption is given below.

\begin{assumption}[Sign-restricted disturbances]\label{advnon}
Given the filtration $\mathcal{F}_t = \{x_0, w_0, \dots, w_{t-1}\}$, a disturbance $w_t \in \mathbb{R}^d$ at the attack times is formulated as
    \begin{equation}\label{alpbet}
        w_t = \alpha_t \circ \beta_t,
    \end{equation}
    where $\alpha_t = [\alpha_t^1, \dots, \alpha_t^d]^T$ and $\beta_t =[\beta_t^1, \dots, \beta_t^d]^T $ satisfy the following conditions:
    \begin{itemize}\setlength{\itemindent}{-0em}
       \item $\alpha_t$ is a sub-Gaussian vector variable (see Definition \ref{subgvec}) symmetric around zero conditioned on $\mathcal{F}_t$, meaning that for all $i=1,\dots, d$, we have
        \begin{equation}\label{alpha}
     \mathbb{P}(\alpha_t^i > 0~|~\mathcal{F}_t) = \mathbb{P}(\alpha_t^i < 0~|~\mathcal{F}_t).  
        \end{equation}
        \item $\beta_t$ is a scaling random vector conditioned on $\mathcal{F}_t$, where $0< \beta_t^i<\infty$ holds for all $i=1,\dots, d$.
    \end{itemize}
\end{assumption}

Note that the resulting $w_t$ is indeed sub-Gaussian conditioned on $\mathcal{F}_t$ since $\alpha_t$ is sub-Gaussian and $\beta_t$ consists of finite entries.
\begin{remark}[Necessity of symmetry assumption]
To formulate adversarial noncentral disturbances, we have introduced $\alpha_t$ that is symmetric around zero. We adopted this assumption since the system cannot defend the attack if the adversary has a full information set $\mathcal{F}_t$ and the freedom of choosing any nonzero-mean attack. For example, consider the scalar system $x_{t+1} = a^* x_t +w_t$, where the true system satisfies $0<a^*<1$ and $0<x_0<1$. Suppose that $w_t$ conditioned on $\mathcal{F}_t$ is chosen by an adversary as $-\text{sgn}(x_t)$ for all $t\geq 0$.   In such a case, one always attains 
\begin{align*}
    0<x_t<1 ~&\text{for}~ t =0,2,4,\dots, \\
 -1<x_t<0 ~&\text{for}~ t =1,3,5,\dots.
\end{align*}
    However, any valid estimation method tries to minimize the distance between $x_{t+1}$ and $\hat{a} x_t$ to find an estimate $\hat{a}$. One can thus only arrive at a negative estimate $\hat{a}$ despite the true system $a^*>0$, since we have $\text{sgn}(x_t)\neq \text{sgn}(x_{t+1})$ for all $t\geq0$. 
This occurs since the attacker can always deceive the system if the attack can be universally unidirectional based on the filtration. The symmetry assumption \eqref{alpha} on $\alpha_t$ prevents such a phenomenon and enables system identification.
\end{remark}

\vspace{-1mm}
   The disturbance structure \eqref{alpbet} includes zero-mean Gaussian variables since $\alpha_t$ can be zero-mean Gaussian and $\beta_t$ can be a constant random vector. Our formulation will later generalize this by converting a symmetric distribution into a noncentral distribution by adjusting a scaling factor $\beta_t$. 

Now, we analyze the sufficient conditions for the exact recovery shown in Theorem \ref{optim}. We first provide the following lemma given in \cite{zhang2024exact} for the upper term in \eqref{zpl}.
\begin{lemma}\label{as245}
    Let $N_T$ be the cardinality of the set $\{1\leq t\leq T-1: w_t \equiv 0, w_{t-1}\in \mathcal{K}_T \}$, which is the number of non-attack times immediately preceded by an attack. Suppose that Assumptions  \ref{maxnorm}, \ref{null}, and \ref{nondg} hold.
    Given $y\in\mathbb{S}^{d-1}$ and $i\in\{1,\dots,d\}$, we have
        \begin{align}\label{4and3}
            \sum_{\substack{t=0,\\ w_t^i= 0}}^{T-1}  |y^T x_t| \geq \frac{c\lambda^5 N_T}{\sigma_w^4}
        \end{align}
        
        \vspace{-2mm}
\noindent with probability $1-\exp\bigr(-\Theta\bigr(\frac{\lambda^4 N_T}{\sigma_w^4}\bigr)\bigr)$, where $c$ is a positive absolute constant.
\end{lemma}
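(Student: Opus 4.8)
The plan is to discard all but the most informative terms in the sum and then argue that those terms are individually large with a probability that, while small, is still large enough to make their cumulative contribution concentrate. Since every summand $|y^Tx_t|$ is nonnegative and every $t$ with $w_t\equiv 0$ satisfies $w_t^i=0$, I would first restrict the left-hand side of \eqref{4and3} to the index set $\mathcal T=\{1\le t\le T-1: w_t\equiv 0,\ w_{t-1}\in\mathcal K_T\}$ of cardinality $N_T$, obtaining $\sum_{t:w_t^i=0}|y^Tx_t|\ge\sum_{t\in\mathcal T}|y^Tx_t|$. The point of this restriction is that for $t\in\mathcal T$ the state $x_t=A^*x_{t-1}+w_{t-1}$ is produced by a genuine attack at time $t-1$, so the non-degeneracy of Assumption \ref{nondg} can be invoked.

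For a fixed $t\in\mathcal T$, writing $Z_t=y^Tx_t$ and applying Assumption \ref{nondg} at the attack time $t-1$ with the $\mathcal F_{t-1}$-measurable choice $x=A^*x_{t-1}$ gives the conditional second-moment bound $\mathbb E[Z_t^2\mid\mathcal F_{t-1}]=y^T\mathbb E[x_tx_t^T\mid\mathcal F_{t-1}]y\ge\lambda^2$ for every $y\in\mathbb S^{d-1}$. To turn this into a lower bound on $|Z_t|$ I would apply the Paley--Zygmund inequality to $Z_t^2$, namely $\mathbb P(Z_t^2\ge\tfrac12\mathbb E[Z_t^2\mid\mathcal F_{t-1}]\mid\mathcal F_{t-1})\ge\tfrac14(\mathbb E[Z_t^2\mid\mathcal F_{t-1}])^2/\mathbb E[Z_t^4\mid\mathcal F_{t-1}]$, which requires a matching fourth-moment upper bound. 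Decomposing $Z_t=\mu_t+W_t$ with the $\mathcal F_{t-1}$-measurable mean $\mu_t=y^TA^*x_{t-1}$ and the conditionally sub-Gaussian part $W_t=y^Tw_{t-1}$ (of norm at most $\sigma_w$ by Assumption \ref{maxnorm} and Lemma \ref{uppsi}), the moment property \eqref{moment} yields $\mathbb E[Z_t^4\mid\mathcal F_{t-1}]\lesssim(|\mu_t|+\sigma_w)^4$. In the regime $|\mu_t|\lesssim\sigma_w$ this is $\lesssim\sigma_w^4$, so Paley--Zygmund gives the key estimate: with conditional probability at least $q=\Theta(\lambda^4/\sigma_w^4)$ one has $|Z_t|\ge\lambda/\sqrt2$. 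The complementary regime $|\mu_t|\gtrsim\sigma_w$ is easier and is handled directly, since there $|Z_t|\ge|\mu_t|-|W_t|\ge\lambda/\sqrt2$ already with constant probability (using $\lambda\lesssim\sigma_w$, which itself follows from Assumption \ref{nondg} at $x=0$ together with \eqref{moment}), so the same conclusion holds with an even larger probability.

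With the per-step success event $\{|Z_t|\ge\lambda/\sqrt2\}$ of conditional probability at least $q$ in hand, the final step is to count successes. Since $\sum_{t\in\mathcal T}|Z_t|\ge(\lambda/\sqrt2)\cdot\#\{t\in\mathcal T:|Z_t|\ge\lambda/\sqrt2\}$, it suffices to show that at least $\tfrac12 qN_T$ of the $N_T$ trials succeed. Each success indicator has conditional mean at least $q$, so a multiplicative Chernoff/martingale concentration bound gives that the number of successes is at least $\tfrac12 qN_T$ except with probability $\exp(-\Theta(qN_T))=\exp(-\Theta(\lambda^4N_T/\sigma_w^4))$, matching the stated failure probability; substituting back yields $\sum_{t\in\mathcal T}|Z_t|\ge(\lambda/\sqrt2)\cdot\tfrac12 qN_T=\Theta(\lambda^5N_T/\sigma_w^4)$, as claimed.

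I expect the main obstacle to be the concentration-of-successes step rather than the moment computations. The difficulty is that the trial set $\mathcal T$ and the success indicators are measurable with respect to intertwined $\sigma$-algebras: membership $t\in\mathcal T$ depends on the future disturbance $w_t$ (through $w_t\equiv 0$), whereas the success event $\{|Z_t|\ge\lambda/\sqrt2\}$ is already determined by $x_t\in\mathcal F_t$, and $N_T$ itself is random. Consequently a clean i.i.d. Chernoff bound is not directly available; I would instead phrase the count as a sum of a martingale-difference sequence with respect to $\{\mathcal F_{t}\}$ and apply a Freedman-type (or Azuma-type) inequality whose deviation is controlled in terms of the realized $N_T$, so that both the bound and the exponent are expressed through $N_T$ as required. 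A secondary technical point is the nonzero conditional mean $\mu_t$, which forces the two-regime split above to prevent the fourth-moment bound from degrading.
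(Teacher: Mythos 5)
You cannot be compared against the paper's own proof here, because the paper does not contain one: Lemma \ref{as245} is imported verbatim from \cite{zhang2024exact} and stated without proof. Judged on its own, your reconstruction is sound in all essentials, and it is strong evidence for its correctness that it reproduces the exact signature of the stated bound: restricting to the $N_T$ non-attack times immediately preceded by attacks, invoking Assumption \ref{nondg} with the $\mathcal{F}_{t-1}$-measurable shift $x=A^*x_{t-1}$, and running Paley--Zygmund with fourth moments controlled via \eqref{moment} and Assumption \ref{maxnorm} gives a per-trial success probability $q=\Theta(\lambda^4/\sigma_w^4)$ at magnitude $\Theta(\lambda)$, whence the threshold $\Theta(\lambda\cdot qN_T)=\Theta(\lambda^5N_T/\sigma_w^4)$ and failure exponent $\Theta(qN_T)$, exactly as claimed. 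Your two-regime split in $|\mu_t|$ (using $\lambda\lesssim\sigma_w$ from Assumption \ref{nondg} at $x=0$) is the right way to keep the fourth-moment bound from degrading, and your diagnosis of the measurability entanglement is correct, as is its resolution: by Assumption \ref{null} the inclusion event $\{w_t\equiv 0\}$ has conditional probability exactly $1-p$ given $\mathcal{F}_t$, while the success event is $\mathcal{F}_t$-measurable, so inclusion merely multiplies conditional means by $1-p$ and does not bias the successes.

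The one place where your plan as written would fail is the parenthetical ``(or Azuma-type)'' in the final step. An Azuma bound with $O(1)$ increments over $T$ steps gives an exponent of order $q^2N_T^2/T=\Theta(q^2p(1-p)N_T)$, and even Freedman used with the crude per-step variance bound $\mathbb{E}[D_t^2\,|\,\mathcal{F}_{t-1}]\le p(1-p)$ gives only $\Theta(q^2N_T)$; both are strictly weaker than the claimed $\Theta(qN_T)=\Theta(\lambda^4N_T/\sigma_w^4)$. To get the multiplicative rate you must exploit that when the adversary pins the conditional success probability $\mu_t$ at its minimum $q$, the per-trial conditional variance is itself $O(p(1-p)q)$ (Bernoulli variance), so Freedman operates in the regime where deviation and variance proxy are comparable; when $\mu_t\gg q$ the variance grows but the compensator grows with it, so the bound only improves. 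A cleaner packaging, which also formalizes the statement's use of the random $N_T$ inside the probability, is a multiplicative-Chernoff supermartingale: with $C_n=\sum_{t\le n}\mathbb{I}(t\in\mathcal{T})S_t$, $N_n=\sum_{t\le n}\mathbb{I}(t\in\mathcal{T})$, $\theta=1$ and $\gamma=q(1-e^{-1})$, the process $\exp(-\theta C_n+\gamma N_n)$ is a supermartingale under Assumption \ref{null} and the Paley--Zygmund bound, and Markov's inequality then yields $\mathbb{P}\bigl(C_T\le \tfrac{q}{2}N_T,\ N_T\ge n\bigr)\le \exp\bigl(-(\gamma-\tfrac{q}{2})n\bigr)=\exp(-\Theta(qn))$ for every $n$. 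With that substitution for your last step, your outline is a complete proof.
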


Now, given $y\in \mathbb{S}^{d-1}$, we present the analysis on bounding the tail probability of the left-hand side of \eqref{znotationa}, for which we will study the sub-Gaussian norms under Assumption \ref{advnon}.
\begin{lemma}\label{fixedy}
    Suppose that Assumptions \ref{stability},  \ref{maxnorm}, \ref{null}, \ref{nondg}, and \ref{advnon} hold. 
    Define $\tau := \sigma_w / \lambda$. 
    Given $y\in \mathbb{S}^{d-1}$, $i\in\{1,\dots,d\}$, and $\delta\in(0, 1]$, the condition \eqref{znotationa}
    holds when 
    \begin{equation}\label{ffbound}
    T\geq \Theta\biggr(\frac{\max\{1,\tau^{10}\}}{p(1-p)^2 (1-\|A^*\|_2)^2}\cdot \log\biggr(\frac{1}{\delta}\biggr)\biggr)
    \end{equation}
    with probability at least $1-\delta$.
\end{lemma}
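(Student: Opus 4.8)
The plan is to decompose the left-hand side of \eqref{znb} into the attack-time sum $M_T := \sum_{t:\, w_t^i \neq 0} (y^T x_t)\,\text{sgn}(w_t^i)$ and the non-attack-time sum $S_T := \sum_{t:\, w_t^i = 0} |y^T x_t|$, and to show that for $T$ as in \eqref{ffbound} the deterministic linear growth of $S_T$ overwhelms the stochastic fluctuation of $M_T$ with high probability. Since $S_T\geq 0$ always, and Lemma \ref{as245} already furnishes the bound $S_T \geq c\lambda^5 N_T/\sigma_w^4$ on a high-probability event, the whole effort reduces to two pieces: (i) lower-bounding $N_T$ by $\Theta\bigl(p(1-p)T\bigr)$, and (ii) controlling the downward deviation of $M_T$. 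On the intersection of the corresponding good events, $M_T + S_T > 0$, which is exactly \eqref{znb}.

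For step (i), I would observe that the event $\{w_{t-1}\in\mathcal{K}_T\}$ is $\mathcal{F}_t$-measurable while the non-attack event $\{w_t\equiv 0\}$ has conditional probability $1-p$ by Assumption \ref{null}; a tower computation then shows that the increments $\mathbb{I}(w_t\equiv 0,\,w_{t-1}\in\mathcal{K}_T)$ have conditional mean exactly $p(1-p)$ given $\mathcal{F}_{t-1}$. Hence $N_T$ is a predictable linear term plus a bounded martingale, and a Chernoff/Azuma bound yields $N_T \geq \Theta\bigl(p(1-p)T\bigr)$ with probability at least $1-\delta/3$ once $T \gtrsim \log(1/\delta)/(p(1-p))$. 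On this event the failure probability of Lemma \ref{as245} is at most $\exp\bigl(-\Theta(\lambda^4 p(1-p)T/\sigma_w^4)\bigr)$, which is below $\delta/3$ under a requirement that is dominated by \eqref{ffbound}.

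The crux is step (ii), where the symmetry Assumption \ref{advnon} is essential: because $w_t^i = \alpha_t^i\beta_t^i$ with $\beta_t^i>0$, we have $\text{sgn}(w_t^i)=\text{sgn}(\alpha_t^i)$, and $y^T x_t$ is $\mathcal{F}_t$-measurable, so each $D_t := (y^T x_t)\,\text{sgn}(\alpha_t^i)\,\mathbb{I}(\alpha_t^i\neq 0)$ is a conditionally symmetric, hence mean-zero, martingale difference, giving $\mathbb{E}[e^{\lambda D_t}\mid\mathcal{F}_t]\leq \exp\bigl(\tfrac12\lambda^2 (y^T x_t)^2\,\mathbb{I}(t\in\mathcal{K}_T)\bigr)$. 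The predictable quadratic variation is therefore supported only on the attack times. I would first use the triangle inequality for the $\psi_2$-norm together with Assumption \ref{stability} and Lemma \ref{uppsi} to establish $\|y^T x_t\|_{\psi_2}\leq \sigma_w/(1-\|A^*\|_2)$, so that each $(y^T x_t)^2$ is sub-exponential with the right scale; a concentration bound on $\sum_{t\in\mathcal{K}_T}(y^T x_t)^2$ then bounds the quadratic variation by $\Theta\bigl(pT\,\sigma_w^2/(1-\|A^*\|_2)^2\bigr)$ with high probability. Feeding this into a variance-adaptive (Freedman/Bernstein-type) martingale inequality produces $M_T \geq -\,\Theta\bigl(\sqrt{pT\log(1/\delta)}\,\sigma_w/(1-\|A^*\|_2)\bigr)$ with probability at least $1-\delta/3$.

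A union bound over the three good events reduces \eqref{znb} to the inequality $c\lambda^5 p(1-p)T/\sigma_w^4 > \Theta\bigl(\sqrt{pT\log(1/\delta)}\,\sigma_w/(1-\|A^*\|_2)\bigr)$; solving for $T$ and recalling $\tau=\sigma_w/\lambda$ gives precisely the threshold \eqref{ffbound}, with the $\max\{1,\tau^{10}\}$ absorbing the constant-order requirement from step (i) when $\tau<1$. I expect the main obstacle to be the quadratic-variation estimate in step (ii): obtaining the sharp $\sqrt{p}$ factor (rather than the naive $\sqrt{T}$) is exactly what converts the denominator from $p^2(1-p)^2$ to the claimed $p(1-p)^2$, and it forces the use of a variance-adaptive martingale bound supported on the attack times instead of a plain sub-Gaussian Azuma inequality.
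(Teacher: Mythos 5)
Your overall architecture (splitting \eqref{znb} into the attack-time sum $M_T$ and the non-attack sum $S_T$, invoking Lemma \ref{as245} for $S_T$, and controlling $N_T$ and $|\mathcal{K}_T|$ by Chernoff-type counting) matches the paper, and your step (i) is essentially the paper's \eqref{imp3}. Where you genuinely diverge is step (ii): the paper does \emph{not} run a time-indexed martingale with a variance-adaptive (Freedman-type) inequality. Instead, it expands $x_t$ in the disturbances and reorganizes the attack-time sum into $1+|\mathcal{K}_T|$ blocks indexed by the \emph{disturbance} times $s\in\mathcal{K}_T$, namely $\sum_{t>s,\,w_t^i\neq 0} y^T(A^*)^{t-1-s}w_s\cdot\text{sgn}(w_t^i)$, and then iterates a conditional MGF bound (conditioning on $\mathcal{F}_s$ and the sign filtration $\mathcal{F}^i$), so that each block carries the \emph{deterministic} sub-Gaussian parameter $\Theta\bigl(\sigma_w/(1-\|A^*\|_2)\bigr)$ via the geometric series and Assumption \ref{maxnorm} (see \eqref{befconvert}--\eqref{subgparam}). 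After this rearrangement, the only random quantity that must be controlled is the Bernoulli count $|\mathcal{K}_T|\lesssim pT$, which is elementary.

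The gap in your route is exactly the step this rearrangement is designed to avoid: you assert that a concentration bound on $\sum_{t\in\mathcal{K}_T}(y^Tx_t)^2$ yields a quadratic-variation bound of order $\Theta\bigl(pT\sigma_w^2/(1-\|A^*\|_2)^2\bigr)$ with high probability, but no tool you invoke delivers this at the sharp (mean) order. The summands $(y^Tx_t)^2$ are unbounded, sub-exponential, and \emph{dependent} --- the adversary can correlate them through $\mathcal{F}_t$ --- so an i.i.d. Bernstein bound does not apply; the $\psi_1$ triangle inequality gives only $\Theta\bigl(pT\log(1/\delta)\bigr)\cdot\sigma_w^2/(1-\|A^*\|_2)^2$, and a uniform truncation of $|y^Tx_t|$ costs an extra $\log(T/\delta)$. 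Either way, feeding the inflated variance into Freedman degrades your final threshold to $\Theta\bigl(\tau^{10}\log^2(1/\delta)/(p(1-p)^2(1-\|A^*\|_2)^2)\bigr)$ (or adds $\log T$), which is strictly weaker than \eqref{ffbound}. Two further points are repairable details rather than gaps: standard Freedman requires a.s.\ bounded increments, which you can fix by truncating at an $\mathcal{F}_t$-measurable level (this preserves the martingale-difference property since $x_t$ is $\mathcal{F}_t$-measurable); and the predictable quadratic variation is not ``supported only on the attack times'' --- it equals $\sum_t q_t (y^Tx_t)^2$ with $q_t=\mathbb{P}(w_t^i\neq 0\,|\,\mathcal{F}_t)\leq p$, supported on all times, though this still yields the factor $p$ you need. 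The unproven sharp quadratic-variation bound is the real missing piece; filling it for adversarially dependent trajectories essentially requires the paper's block decomposition (or comparably heavy machinery for dependent quadratic forms), at which point the time-indexed martingale detour becomes unnecessary.
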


\begin{proof}
    We first analyze the sub-Gaussian parameter of the lower term in \eqref{zpl}.
   Note that the symmetry assumption \eqref{alpha} implies that the adversarial disturbance $w_t$ also satisfies
    \[
    \mathbb{P}(w_t^i > 0~|~\mathcal{F}_t) = \mathbb{P}(w_t^i < 0~|~\mathcal{F}_t)  
    \]
    for all $\mathcal{F}_t$ due to $\beta_t >0$. 
    In turn, at the attack times, we have $\mathbb{E}[y^T x_t\cdot \text{sgn}(w_t^i)] = \mathbb{E}[y^T x_t\cdot \mathbb{E}[ \text{sgn}(w_t^i)~|~\mathcal{F}_t]] = 0$. Thus, we can leverage Definition \ref{mgfdef} to derive the sub-Gaussian parameter of the relevant term. Under Assumption \ref{null}, the state equation is written as \[x_t = (A^*)^tx_0 + \sum_{k\in  [0,t-1]\cap \mathcal{K}_T } (A^*)^{t-1-k} w_k\] due to the system dynamics \eqref{sysd}. 
    Define another filtration \[\mathcal{F}^i = \bm{\sigma}\{\text{sgn}(w_t^i): 0\leq t\leq T-1, w_t^i \neq 0\}.\] Then, for all $\lambda\in\mathbb{R}$ and for all $s\in [0,T-2]\cap \mathcal{K}_T$, we have
\begin{align*}
    &\mathbb{E}\Bigr[\exp\bigr(\lambda \sum_{\substack{t=s+1,\\w_t^i\neq 0 }}^{T-1}  y^T(A^*)^{t-1-s} w_{s}  \cdot  \text{sgn}(w_t^i) \bigr)~\Bigr|~\mathcal{F}_{s}, \mathcal{F}^i\Bigr]\\&\hspace{29mm}\leq \exp\biggr(\lambda^2 \cdot \Theta\Bigr(\frac{\sigma_w}{1-\|A^*\|_2}\Bigr)^2\biggr) \numberthis\label{convert}
\end{align*}
due to
\begin{align*}
&\Biggr\|\sum_{\substack{t=s+1,\\w_t^i\neq 0 }}^{T-1}  y^T(A^*)^{t-1-s} w_{s}  \cdot  \text{sgn}(w_t^i)  \Biggr\|_{\psi_2} 
\\&\hspace{10mm}\leq\sum_{\substack{t=s+1,\\w_t^i\neq 0 }}^{T-1} \bigr\|  (A^*)^{t-1-s} w_{s}  \cdot  \text{sgn}(w_t^i) \bigr\|_{\psi_2} \\&\hspace{10mm}\leq \sum_{\substack{t=s+1,\\w_t^i\neq 0 }}^{T-1}\|  A^*  \|_{2}^{t-1-s} \| w_{s}\|_{\psi_2} \leq \frac{\sigma_w}{1-\|A^*\|_2} ,\numberthis\label{befconvert}
    \end{align*}
    conditioned on $\mathcal{F}_s$ and $\mathcal{F}^i$.
     The first inequality comes from Lemma \ref{uppsi}, the triangle inequality, and $\|y\|_2=1$. The second inequality is due to Lemma \ref{uppsi} and the fact that alternating the sign does not affect the sub-Gaussian norm since the definition \eqref{norm} involves the squared variable. From \eqref{befconvert}, the equivalence of Definitions \ref{subgdef1} and \ref{mgfdef} yields \eqref{convert}. 
     
By repeatedly selecting from the largest to the smallest element for $s\in[0,T-2]\cap \mathcal{K}_T$,  
applying the tower rule conditioning on $(\mathcal{F}_s, \mathcal{F}^i)$, and leveraging \eqref{convert}, we obtain 
\begin{align*}
        &\mathbb{E}\Biggr[\exp \biggr(\lambda \biggr[\sum_{\substack{t=0,\\ w_t^i\neq 0}}^{T-1} y^T x_t\cdot  \text{sgn}(w_t^i)\biggr]\biggr)\Biggr] 
        \\ &= \mathbb{E}\Biggr[\exp \biggr(\lambda \biggr[   \sum_{\substack{t=0,\\ w_t^i\neq 0}}^{T-1}\Bigr[y^T(A^*)^t x_0 \cdot \text{sgn}(w_t^i)\Bigr] \\&\hspace{6mm}+ \sum_{k\in[0,T-2]\cap \mathcal{K}_T} \sum_{\substack{t=k+1,\\w_t^i\neq 0 }}^{T-1}  y^T(A^*)^{t-1-k} w_k  \cdot  \text{sgn}(w_t^i) \Bigr]\biggr]\biggr)\Biggr] \\ 
        &\leq  \cdots  \leq \exp\biggr(\lambda^2(1+|\mathcal{K}_T|) \cdot \Theta\Bigr(\frac{\sigma_w}{1-\|A^*\|_2}\Bigr)^2\biggr).\numberthis\label{subgparam} 
    \end{align*}
    Since the term of interest has zero mean, we can use the sub-Gaussian parameter given in \eqref{subgparam} and apply the inequality \eqref{minus} given in Lemma \ref{hoeffding} to derive
    \begin{align*}
&\mathbb{P}\biggr(\sum_{\substack{t=0,\\ w_t^i\neq 0}}^{T-1} y^T x_t\cdot  \text{sgn}(w_t^i) > -\frac{c\lambda^5 N_T}{2\sigma_w^4}\biggr) \\
& \hspace{10mm}\geq 1-\exp\Biggr( - \Theta\biggr(\frac{(\frac{\lambda^5 N_T}{\sigma_w^4})^2}{(1+|\mathcal{K}_T|)\cdot (\frac{\sigma_w}{1-\|A^*\|_2})^2}\biggr)\Biggr) \\& \hspace{10mm}= 1-\exp\Biggr( - \Theta\biggr(\frac{N_T^2 \cdot (1-\|A^*\|_2)^2}{(1+|\mathcal{K}_T|)\cdot \tau^{10} }\biggr)\Biggr) \numberthis\label{imp2}
    \end{align*}
    where $c$ and $N_T$ are the quantities in Lemma \ref{as245}. 
    Constructing the union bound using Lemma \ref{as245} and \eqref{imp2}, we obtain
    \begin{align*}
&\mathbb{P}\biggr(\sum_{\substack{t=0,\\ w_t^i\neq 0}}^{T-1} y^T x_t\cdot  \text{sgn}(w_t^i) + \sum_{\substack{t=0,\\w_t^i=0}}^{T-1}|y^T x_t| > \frac{c\lambda^5 N_T}{2\sigma_w^4}\biggr)\numberthis\label{25} \\&\hspace{5mm}\geq 1-\exp\Biggr(-\Theta\biggr(\frac{\lambda^4 N_T}{\sigma_w^4}\biggr)\Biggr) \\&\hspace{20mm}-\exp\Biggr( - \Theta\biggr(\frac{N_T^2 \cdot (1-\|A^*\|_2)^2}{(1+|\mathcal{K}_T|)\cdot \tau^{10} }\biggr)\Biggr) \numberthis\label{finalbound}
    \end{align*}
    Finally, due to Assumption \ref{null}, the probabilistic attack model implies that we have 
    \begin{equation}\label{imp3}
        N_T \geq \Theta(p(1-p)T), \quad 1+|\mathcal{K}_T| \leq \Theta(pT)
    \end{equation}
    with probability at least $1-\exp(-\Theta(p(1-p)T))$, considering the expectations of each quantity. The union bound \eqref{finalbound} is then lower-bounded by $1-\delta$ if we have
    \begin{align*}
        &T\geq \Theta\biggr(\max\Bigr\{ \frac{1}{p(1-p)}, \frac{\tau^4}{p(1-p)}, \\&\hspace{25mm}\frac{\tau^{10}}{p(1-p)^2 (1-\|A^*\|_2)^2}\Bigr\}\cdot \log\Bigr(\frac{1}{\delta}\Bigr)\biggr),
    \end{align*}
    which is equivalent to \eqref{ffbound} since $1-p$ and $1-\|A^*\|_2$ are less than $1$. 
\end{proof}


To satisfy the condition \eqref{znotationa} not only for a single $(y,i)$ pair but for all $y\in\mathbb{S}^{d-1}$ and $i\in\{1,\dots,d\}$, we leverage the following lemma presented in \cite{vershynin2010intro}.
\begin{lemma}[Covering number of the sphere]\label{covering}
    For $\epsilon>0$, consider a subset $\mathcal{N}_\epsilon$ of $\mathbb{S}^{d-1}$, such that for all    
    $y\in\mathbb{S}^{d-1}$, there exists some point $\tilde{y}\in\mathcal{N}_\epsilon$ satisfying $\|y-\tilde{y}\|_2 \leq \epsilon$. The minimal cardinality of such a subset is called the covering number of the sphere and is upper-bounded by $(1+\frac{2}{\epsilon})^d$.
\end{lemma}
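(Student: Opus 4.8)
The plan is to prove the bound via a standard volumetric packing argument, constructing an explicit $\epsilon$-net whose cardinality dominates the minimal covering number. First I would pass from covering to packing: let $\mathcal{N}$ be a \emph{maximal} $\epsilon$-separated subset of $\mathbb{S}^{d-1}$, i.e., a set whose points are pairwise at distance strictly greater than $\epsilon$ and to which no further point of the sphere can be adjoined without violating this separation. Such a maximal set can be built greedily, and it will be finite by the bound we are about to derive. The key observation is that maximality forces $\mathcal{N}$ to already be an $\epsilon$-net: if some $y\in\mathbb{S}^{d-1}$ had $\|y-\tilde{y}\|_2 > \epsilon$ for every $\tilde{y}\in\mathcal{N}$, then $y$ could be added to $\mathcal{N}$ while preserving separation, contradicting maximality. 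Hence $\mathcal{N}$ is itself a valid covering, so the minimal covering number is at most $|\mathcal{N}|$, and it suffices to bound $|\mathcal{N}|$.

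Next I would bound $|\mathcal{N}|$ by comparing volumes in $\mathbb{R}^d$. Since the points of $\mathcal{N}$ are pairwise at distance greater than $\epsilon$, the Euclidean balls of radius $\epsilon/2$ centered at these points are pairwise disjoint. Moreover, each such ball is contained in the ball of radius $1+\epsilon/2$ about the origin, because every center lies on the unit sphere (distance $1$ from the origin) and the ball extends a further $\epsilon/2$ outward. Writing $\mathrm{vol}(B_r)$ for the volume of a Euclidean ball of radius $r$ in $\mathbb{R}^d$ and using $\mathrm{vol}(B_r)=r^d\,\mathrm{vol}(B_1)$, disjointness together with containment gives
\[
|\mathcal{N}|\cdot (\epsilon/2)^d\,\mathrm{vol}(B_1) \;\leq\; (1+\epsilon/2)^d\,\mathrm{vol}(B_1).
\]
Cancelling the dimension-dependent constant $\mathrm{vol}(B_1)$ and rearranging yields $|\mathcal{N}|\leq (1+\epsilon/2)^d/(\epsilon/2)^d = (1+2/\epsilon)^d$, which is exactly the claimed bound.

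The argument is largely routine, and the step I expect to require the most care is the packing-to-covering reduction in the first paragraph: making precise that maximality of the separated set is precisely what guarantees it is simultaneously an $\epsilon$-net, and that the cardinality of this particular net is therefore an upper bound on the \emph{minimal} covering number rather than merely on the size of some arbitrary net. The volume comparison itself is clean once one observes that the unknown constant $\mathrm{vol}(B_1)$ cancels, so the estimate reduces to a pure ratio of radii raised to the $d$-th power and needs no sharper geometric input.
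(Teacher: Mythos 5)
Your proof is correct: the passage from a maximal $\epsilon$-separated set to an $\epsilon$-net, followed by the volume comparison of disjoint balls of radius $\epsilon/2$ inside the ball of radius $1+\epsilon/2$, yields exactly the bound $(1+\frac{2}{\epsilon})^d$. The paper does not prove this lemma itself but imports it from the cited reference (Vershynin), where the proof is precisely this volumetric packing argument, so your approach coincides with the source's.
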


Now, we present one of our main theorems to satisfy sufficient conditions for the exact recovery. 
\begin{theorem}\label{time}
    Suppose that Assumptions \ref{stability},  \ref{maxnorm}, \ref{null}, \ref{nondg}, and \ref{advnon} hold. Given $\delta\in (0,1],$
    the $l_1$-norm estimator defined in \eqref{l1est} achieves the exact recovery with probability at least $1-\delta$ when
    \begin{align}\label{timeeq}
    \nonumber &T\geq  \Theta\biggr(\frac{d\cdot \max\{1,\tau^{10}\}}{p(1-p)^2 (1-\|A^*\|_2)^2}\\&\hspace{13mm}\times \max\biggr\{1, \log\biggr(\frac{d\cdot \tau}{\delta(1-p)(1-\|A^*\|_2) }\biggr)\biggr\}\biggr).
    \end{align}
\end{theorem}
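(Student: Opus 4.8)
The plan is to upgrade the single-pair guarantee of Lemma \ref{fixedy} to a statement that holds \emph{simultaneously} for all $y\in\mathbb{S}^{d-1}$ and all $i\in\{1,\dots,d\}$, since by Theorem \ref{optim} this uniform version of \eqref{znb} is exactly what certifies that $A^*$ is the unique minimizer. Because $\mathbb{S}^{d-1}$ is a continuum, the natural device is an $\epsilon$-net combined with a Lipschitz control of the relevant random function and a union bound. For fixed $i$, write $f_i(y):=\sum_{t:\,w_t^i\neq 0} y^T x_t\cdot\text{sgn}(w_t^i)+\sum_{t:\,w_t^i=0}|y^T x_t|$ for the left-hand side of \eqref{znb}. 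First I would record that, given the data, $f_i$ is Lipschitz in $y$: since $|y^T x_t\cdot\text{sgn}(w_t^i)-\tilde y^T x_t\cdot\text{sgn}(w_t^i)|\leq\|y-\tilde y\|_2\|x_t\|_2$ and $\bigl||y^T x_t|-|\tilde y^T x_t|\bigr|\leq\|y-\tilde y\|_2\|x_t\|_2$, one gets $|f_i(y)-f_i(\tilde y)|\leq L\|y-\tilde y\|_2$ with $L:=\sum_{t=0}^{T-1}\|x_t\|_2$, the same constant for every coordinate $i$.

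The feature that makes the net argument succeed is that Lemma \ref{fixedy} supplies not merely positivity but a positive margin: its inequality \eqref{25} gives $f_i(\tilde y)>\frac{c\lambda^5 N_T}{2\sigma_w^4}$ with high probability. So I would next bound $L$ from above and this margin from below. For $L$, I would use the representation $x_t=(A^*)^t x_0+\sum_k (A^*)^{t-1-k}w_k$ with Assumptions \ref{stability} and \ref{maxnorm}: the geometric-series computation of \eqref{befconvert} together with Lemma \ref{uppsi} yields $\|x_t\|_{\psi_2}\leq\sigma_w/(1-\|A^*\|_2)$, so Hoeffding's inequality (Lemma \ref{hoeffding}) and a union bound over $t$ give $\|x_t\|_2\leq\Theta\bigl(\frac{\sigma_w}{1-\|A^*\|_2}\sqrt{d+\log(T/\delta)}\bigr)$ uniformly, hence $L\leq\Theta\bigl(\frac{\sigma_w T}{1-\|A^*\|_2}\sqrt{d+\log(T/\delta)}\bigr)$. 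For the margin, $N_T\geq\Theta(p(1-p)T)$ by \eqref{imp3}, so it is of order $\Theta\bigl(\frac{\lambda^5 p(1-p)T}{\sigma_w^4}\bigr)$.

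I would then pick the net resolution $\epsilon$ so that the slack $L\epsilon$ is strictly below the margin. Both quantities are $\Theta(T)$, so balancing them cancels the $T$ dependence and forces $\epsilon=\Theta\bigl(\frac{p(1-p)(1-\|A^*\|_2)}{\tau^5\sqrt{d+\log(T/\delta)}}\bigr)$ with $\tau=\sigma_w/\lambda$. Applying Lemma \ref{fixedy} at each of the $d\cdot(1+2/\epsilon)^d$ pairs $(\tilde y,i)$, with $\tilde y$ ranging over an $\epsilon$-net $\mathcal{N}_\epsilon$ of cardinality at most $(1+2/\epsilon)^d$ (Lemma \ref{covering}), and taking a union bound, I would require the per-pair failure probability $\delta'$ to satisfy $d(1+2/\epsilon)^d\delta'\leq\delta$, i.e.\ $\log(1/\delta')=\Theta\bigl(d\log(1/\epsilon)+\log(1/\delta)\bigr)$. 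At each net point the margin survives, and the Lipschitz bound transfers $f_i(\tilde y)>L\epsilon$ to $f_i(y)>0$ for every neighboring $y$, establishing \eqref{znb} uniformly. Substituting $\log(1/\delta')$ into the time requirement \eqref{ffbound}, using $\log(1/\epsilon)=\Theta\bigl(\log\frac{d\tau}{(1-p)(1-\|A^*\|_2)}\bigr)$ (the $\sqrt{d+\log(T/\delta)}$ contributes only doubly-logarithmically and is absorbed), and merging the additive $\log(1/\delta)$ into the logarithm via $d\geq 1$, yields precisely the multiplicative factor $d$ and the logarithmic term in \eqref{timeeq}.

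The hard part will be that the margin and the Lipschitz constant \emph{both} grow linearly in $T$, so positivity after coarsening to a net is not free: one must verify that the per-step margin $\Theta(\lambda^5 p(1-p)/\sigma_w^4)$ strictly dominates the per-step slack $\epsilon\cdot\Theta(\sigma_w\sqrt{d}/(1-\|A^*\|_2))$ for the chosen $\epsilon$, uniformly over the net and over all $d$ coordinates, while keeping the union-bound failure under control. The $\sqrt{d}$ scaling of the typical state norm is exactly what injects the dimension $d$ both as a prefactor and inside the logarithm of \eqref{timeeq}, and carefully tracking this dependence through the margin-versus-slack balance is the delicate step.
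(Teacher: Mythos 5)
Your overall strategy is exactly the paper's: an $\epsilon$-net from Lemma \ref{covering}, Lipschitz control of $f_i$ with constant $L=\sum_{t=0}^{T-1}\|x_t\|_2$, the margin $\frac{c\lambda^5 N_T}{2\sigma_w^4}$ from \eqref{25} at the net points, and a union bound over the net and over the $d$ coordinates. The one place you genuinely deviate is how $L$ is bounded, and that deviation costs you the theorem as stated. You bound each state separately, $\|x_t\|_2\lesssim \frac{\sigma_w}{1-\|A^*\|_2}\sqrt{d+\log(T/\delta)}$, so $L\lesssim \frac{\sigma_w T}{1-\|A^*\|_2}\sqrt{d+\log(T/\delta)}$: linear in $T$ with no factor of $p$. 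The paper instead exchanges the two sums (see \eqref{againsub}): $\sum_t\|x_t\|_2 \le \frac{1}{1-\|A^*\|_2}\bigl[\|x_0\|_2+\sum_{k\in\mathcal{K}_T}\|w_k\|_2\bigr]$, a quantity whose sub-Gaussian norm scales with $1+|\mathcal{K}_T|=\Theta(pT)$, because only the roughly $pT$ attack-time disturbances are nonzero. Since the margin $\Theta(\lambda^5 p(1-p)T/\sigma_w^4)$ itself carries a factor $p$, the paper's slack-versus-margin balance cancels the $p$'s and yields $\epsilon=\Theta\bigl(\frac{(1-p)(1-\|A^*\|_2)}{\tau^5\sqrt{\log(1/\delta)}}\bigr)$, free of $p$, whereas your balance forces $\epsilon=\Theta\bigl(\frac{p(1-p)(1-\|A^*\|_2)}{\tau^5\sqrt{d+\log(T/\delta)}}\bigr)$. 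The resolution $\epsilon$ enters the sample complexity only through the net-size term $d\log(1+2/\epsilon)$, so your choice injects an additive $d\log(1/p)$ that the claimed bound \eqref{timeeq} does not contain: $\log(1/p)$ is not dominated by $\max\bigl\{1,\log\frac{d\tau}{\delta(1-p)(1-\|A^*\|_2)}\bigr\}$ as $p\to 0$ with the other parameters fixed. So your argument does prove finite-time exact recovery, but with a recovery time strictly weaker than \eqref{timeeq} in the small-$p$ regime; it does not establish the theorem's stated rate. (The $\sqrt{d+\log(T/\delta)}$ factor, by contrast, is harmless: the $\tfrac12\log d$ is absorbed into $\log(d\tau/\cdots)$ and the $\log\log$ part is absorbable as you say.)

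Two smaller points. First, because your $\epsilon$ depends on $T$, the net cardinality and hence the time requirement obtained from \eqref{ffbound} depend on $T$ itself; this self-referential condition is resolvable since the dependence is doubly logarithmic, but it should be closed by an explicit fixed-point or monotonicity argument rather than waved through, and it disappears entirely with the paper's $T$-free choice of $\epsilon$. Second, your uniform bound on $\|x_t\|_2$ requires a norm-concentration inequality for sub-Gaussian \emph{vectors} (or per-coordinate scalar Hoeffding plus a union bound, giving $\sqrt{d\log(dT/\delta)}$); the paper's Lemma \ref{hoeffding} as stated is scalar and does not directly apply to $\|x_t\|_2$. Both of these issues, and the $\log(1/p)$ defect, vanish if you replace your per-state bound by the paper's geometric-series bound on $\sum_t\|x_t\|_2$ and keep the rest of your argument unchanged.
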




\begin{proof}
We first show that  the condition \eqref{znotationa} holds with high probability, given $i\in\{1,\dots,d\}$. 
Let \[\epsilon:= \Theta\biggr(\frac{(1-p)(1-\|A^*\|_2)}{\tau^5 \sqrt{\log(1/\delta)}}\biggr).\]
For any $y,y'\in\mathbb{S}^{d-1}$ such that $\|y-y'\|_2\leq \epsilon$, we have
\begin{align*}
    \sum_{t=0}^{T-1} z_t^i& (y) - \sum_{t=0}^{T-1} z_t^i (y') \geq -\sum_{t=0}^{T-1} |(y-y')^T x_t| \\&\geq -\|y-y'\|_2 \sum_{t=0}^{T-1} \|x_t\|_2 \geq -\epsilon\sum_{t=0}^{T-1} \|x_t\|_2 \\&\geq-\epsilon \sum_{t=0}^{T-1} \Bigr[\|A^*\|_2^t \cdot \|x_0\|_2 + \sum_{k\in\mathcal{K}_T}\|A^*\|_2^{t-1-k} \|w_k\|_2 \Bigr] \\&\geq -\frac{\epsilon}{1-\|A^*\|_2}\Bigr[\|x_0\|_2 + \sum_{k\in\mathcal{K}_T}\|w_k\|_2\Bigr]\numberthis\label{againsub}
\end{align*}
where the first inequality is due to the triangle inequality. Due to homogeneity and the triangle inequality, the sub-Gaussian norm (see \eqref{norm}) of the term in \eqref{againsub} is bounded by $\frac{\epsilon(1+|\mathcal{K}_T|)\sigma_w}{1-\|A^*\|_2}$. We leverage the inequality \eqref{ghoeff} in Lemma \ref{hoeffding} to obtain
\begin{align*}
    &\mathbb{P}\Bigr(\sum_{t=0}^{T-1} z_t^i(y) - \sum_{t=0}^{T-1} z_t^i (y') > -\frac{c\lambda^5 N_T}{4\sigma_w^4}\Bigr) \\&\geq \mathbb{P}\Bigr(-\frac{\epsilon}{1-\|A^*\|_2}\Bigr[\|x_0\|_2 + \sum_{k\in\mathcal{K}_T}\|w_k\|_2\Bigr]> -\frac{c\lambda^5 N_T}{4\sigma_w^4}\Bigr) \\&\geq 1-2\exp\biggr(-\Theta\Bigr(\frac{\lambda^{10} N_T^2 (1-\|A^*\|_2)^2}{\epsilon^2 (1+|\mathcal{K}_T|)^2\sigma_w^{10})}\Bigr)\biggr) \\&= 1-2\exp\biggr(-\Theta\Bigr(\frac{(1-p)^2 (1-\|A^*\|_2)^2}{\epsilon^2 \tau^{10}}\Bigr)\biggr) \geq 1-\frac{\delta}{2},
\end{align*}
where the equality is by considering \eqref{imp3}, provided that $T\geq \Theta(\frac{1}{p(1-p)}\log(\frac{1}{\delta}))$.  Then, by Lemma \ref{covering}, if we have a subset with $(1+\frac{2}{\epsilon})^d$ number of points $\{y_j: j=1,2,\dots\}$ in $\mathbb{S}^{d-1}$ that satisfy $\sum_{t=0}^{T-1} z_t^i(y_j) > \frac{c\lambda^5 N_T}{2\sigma_w^4}$ (see \eqref{25}) for all $j$ with probability at least $1-\frac{\delta}{2}$, then we will have 
\begin{align}\label{imbound}
    &\sum_{t=0}^{T-1} z_t^i(y)  > \frac{c\lambda^5 N_T}{4\sigma_w^4}>0, \quad \forall y\in\mathbb{S}^{d-1}
\end{align}
with probability at least $1-\delta$. Thus, it suffices to replace $\delta$ in \eqref{ffbound} with $\frac{\delta}{2(1+2/\epsilon)^d}$ to achieve \eqref{imbound}. We have
\begin{align*}
    &\log\biggr(\frac{2(1+2/\epsilon)^d} {\delta}\biggr) = \Theta\biggr(d\log\biggr(1+\frac{2}{\epsilon}\biggr)+\log\biggr(\frac{1}{\delta}\biggr)\biggr)\\&=\Theta\biggr(d\log\biggr(1+\frac{2\tau^5 \sqrt{\log(1/\delta)}}{(1-p)(1-\|A^*\|_2)}\biggr) + \log\biggr(\frac{1}{\delta}\biggr)\biggr)\\&\leq \Theta\biggr(\max\biggr\{d+\log\biggr(\frac{1}{\delta}\biggr), d\log\biggr(\frac{2\tau^5 \sqrt{\log(1/\delta)}}{\delta(1-p)(1-\|A^*\|_2)}\biggr)\biggr\}\biggr) \\&=\Theta\biggr(\max\biggr\{d, d\log\biggr(\frac{\tau }{\delta(1-p)(1-\|A^*\|_2)}\biggr)\biggr\}\biggr), \numberthis\label{maybefinal}
\end{align*}
The last equality is because the first case in the max argument arises when 
\[
\frac{2\tau^5 \sqrt{\log(1/\delta)}}{(1-p)(1-\|A^*\|_2)} \leq 1,
\]
which implies $\log(\frac{1}{\delta})\leq \Theta(\frac{1}{\tau^{10}})$ since $1-p$ and $1-\|A^*\|_2$ are less than 1. Note that Assumptions \ref{maxnorm} and \ref{nondg} result in
\begin{align*}
    \lambda^2 d &\leq \text{tr}(\mathbb{E}[w_tw_t^T ~|~ \mathcal{F}_t]) = \mathbb{E}[\text{tr}(w_tw_t^T) ~|~ \mathcal{F}_t]  \\&= \mathbb{E}[\|w_t\|^2_2 ~|~ \mathcal{F}_t]  \leq 18 \bigr\|\|w_t\|_2\bigr\|_{\psi_2} \leq 18 \bigr\|w_t\bigr\|_{\psi_2} \leq 18\sigma_w^2
\end{align*}
conditioned on any $\mathcal{F}_t$, where the second inequality is due to \eqref{moment} and the third inequality comes from Definition \ref{subgvec}. Thus, 
$\frac{1}{\tau^{10}}$ is bounded by the quantity $(\frac{18}{d})^5$, which yields $d+\log(\frac{1}{\delta}) = \Theta(d)$. Furthermore, the second case in the max argument arises from $\frac{1}{\delta}$ dominating the term $\sqrt{\log(1/\delta)}$. To conclude, we achieve \eqref{imbound} with probability at least $1-\delta$ after
\begin{align*}
    &T\geq  \Theta\biggr(\frac{d\cdot \max\{1,\tau^{10}\}}{p(1-p)^2 (1-\|A^*\|_2)^2}\\&\hspace{13mm}\times \max\biggr\{1, \log\biggr(\frac{\tau}{\delta(1-p)(1-\|A^*\|_2) }\biggr)\biggr\}\biggr).\numberthis\label{please}
\end{align*}
Letting \eqref{imbound} (or equivalently, \eqref{znotationa}) hold with probability at least $1-\frac{\delta}{d}$, and thus replacing $\delta$ in \eqref{please} with $\frac{\delta}{d}$, suffices to guarantee that 
the condition \eqref{znotationa} holds for all $i\in\{1,\dots,d\}$ with probability at least $1-\delta$. By Theorem \ref{optim}, the exact recovery is achieved with the desired probability.
\end{proof}

\begin{remark}[Role of attacks]\label{roleattack}
    In Theorem \ref{time}, the required time for exact recovery is $\tilde{\Theta}\bigr(\frac{d}{p(1-p)^2}\bigr)$, which is minimized when the attack probability $p$ is 1/3. This stems from the role of attacks in two orthogonal ways. First, the estimator requires a longer time as $p\to 1$ since the attacker tries to deceive the system. However, since the attack is also a source of excitement due to Assumption \ref{nondg}, the estimator finds it difficult to learn the system through the attacks as $p\to 0$.
    For example, if $x_0, w_0, \dots, w_{T-1}$ are all identically zero, any matrix $A$ serves as a solution to the estimator, but our goal is to guarantee that the true matrix is the \textit{unique} optimizer.
\end{remark}

\subsection{Exact Recovery under Arbitrary Disturbances}\label{4c}

In the previous subsection, we have discussed the exact recovery under Assumption \ref{advnon}, which requires an equal probability for the sign of the disturbance. In real-world applications, requiring exactly the same positive and negative probabilities may be challenging. 
We will now present a more practical situation \textit{without} considering Assumption \ref{advnon}:
if the attack probability is less than 0.5, it turns out that the $l_1$-norm estimator overcomes arbitrary (possibly adversarial) noncentral disturbance and
the exact recovery is achieved. 
This includes the case where the arbitrary attacks are \textit{always} positive (or negative), representing the worst-case scenario in which the system may easily be deceived towards a unidirectional bias. We formally state the theorem below.

\begin{theorem}\label{pless5}
    Suppose that Assumptions \ref{stability}, \ref{maxnorm}, and \ref{nondg} hold, and Assumption \ref{null} holds with $0<p<0.5$. Given $\delta \in (0,1]$, the $l_1$-norm estimator defined in \eqref{l1est} achieves the exact recovery with probability at least $1-\delta$ when
    \begin{align}\label{timeeq3}
    \nonumber &T\geq  \Theta\biggr(\frac{d\cdot \max\{1,\tau^{10}\}}{p(1-2p)^2 (1-\|A^*\|_2)^2}\\&\hspace{11mm}\times \max\biggr\{1, \log\biggr(\frac{d\cdot \tau}{\delta(1-2p)(1-\|A^*\|_2) }\biggr)\biggr\}\biggr).
    \end{align}
\end{theorem}


\begin{proof}
  We first focus on a restricted class of problems where the distribution of $w_t$ can be constructed via two types of attack vectors $v_t$ and $\tilde v_t$ as:
\begin{align}\label{vtvt}
&w_t \sim I_A \cdot [I_B \cdot v_t+ (1-I_B)\cdot \tilde{v}_t],
  \\ \nonumber &I_A \sim \text{Bernoulli}(2p), \quad I_B \sim \text{Bernoulli}(0.5),\\\nonumber & I_A \perp\!\!\!\perp I_B, \quad (I_A, I_B)\perp\!\!\!\perp v_t, \quad (I_A, I_B)\perp\!\!\!\perp \tilde v_t
\end{align}
 conditioned on $\mathcal{F}_t$, where we use $\perp\!\!\!\perp$ to denote independence. Due to $I_A$, the system is not under attack with probability $1-2p$. 
The roles of the two attacks $v_t$ and $\tilde v_t$ are distinct in the sense that the attacker can select any arbitrary distribution \textit{satisfying Assumption \ref{nondg}} for 
$v_t$, whereas the distribution of $\tilde v_t$ is strategically designed in response to $v_t$ to guarantee that the aggregate distribution of $w_t$ \textit{satisfies Assumption \ref{advnon}} at the attack times. The equation \eqref{alpha} involving $\alpha_t$ is achievable since 
the arbitrary distribution $v_t$ can be neutralized by $\tilde v_t$ to satisfy the symmetry assumption, given that $I_B$ ensures equal probabilities of occurrence for both attacks.

In this scenario, 
Assumptions \ref{null}, \ref{nondg}, and \ref{advnon} are satisfied with the aggregate attack probability $0<2p<1$. Under Assumptions \ref{stability} and \ref{maxnorm}, one can now apply Theorem \ref{time} to obtain \eqref{timeeq3}, where $1-p$ in \eqref{timeeq} is replaced with $1-2p$. This results from the modification of \eqref{imp3} as
\begin{align}\label{ntfinal}
    N_T\geq \Theta(p(1-2p)T), \quad 1+|\mathcal{K}_T| \leq \Theta(2pT)
\end{align}
with high probability, since the lower bound of $N_T$ defined in Lemma \ref{as245} accounts for the product of the probabilities of no attack ($1-2p$) and an attack satisfying Assumption \ref{nondg} (in this case, $v_t$ occurs with probability $p$). The upper bound of $1+|\mathcal{K}_T|$ should consider the total attack probability $2p$. 

Now, suppose that $\tilde v_t$ in \eqref{vtvt} is designed as $\tilde \alpha_t \circ \tilde \beta_t$, with $\tilde \alpha_t$ having finite entries.
Then, let $\tilde\beta_t \to 0^+$, meaning that all coordinates $\tilde \beta_t^i$ can be arbitrarily small but remain positive, and thus $\tilde v_t^i$ will maintain their signs with arbitrarily small magnitudes. This ensures that Assumption \ref{advnon} is not violated and \eqref{imbound} still holds after the recovery time \eqref{timeeq3}. 
Taking the limit $\tilde\beta_t \to 0^+$ can be extended to the case where $\tilde\beta_t=0$, at which point the distribution \eqref{vtvt} collapses to $w_t\sim I_A\cdot I_B\cdot v_t$, with $0<p<0.5$ and $v_t$ satisfying Assumption \ref{nondg}. This is consistent with the assumptions for $w_t$ given by this theorem, where the inequality \eqref{imbound} is  revised to a non-strict form as
\begin{align}\label{finafafaf}
    \nonumber &\sum_{t=0}^{T-1} z_t^i(y) =\sum_{\substack{t=0,\\ w_t^i\neq 0}}^{T-1} (y^T x_t\cdot  \text{sgn}(w_t^i)) +\sum_{\substack{t=0,\\ w_t^i= 0}}^{T-1}  |y^T x_t|\\ &\hspace{32mm} \geq \frac{c\lambda^5 N_T}{4\sigma_w^4}>0, \quad \forall y\in\mathbb{S}^{d-1},
\end{align}
since $y^T x_t$ is a continuous function of $w_0, \dots, w_{t-1}$ and $y^T x_t\cdot  \text{sgn}(w_t^i)$ either increases to or remains at $|y^Tx_t|$ when $\tilde\beta_t^i\to 0^+$ is converted to $\tilde\beta_t^i = 0$ (see \eqref{zpl}). Note that the last inequality of \eqref{finafafaf} remains strict, ensuring that
the condition \eqref{znotationa} is satisfied with the exact recovery time \eqref{timeeq3}.
\end{proof}
\begin{remark}
     In Theorem \ref{pless5},  we highlight that the $l_1$-norm estimator withstands arbitrary adversarial noncentral disturbances  with the exact recovery time of $\tilde{\Theta}\bigr(\frac{d}{p(1-2p)^2}\bigr)$, provided that the attack probability is restricted to $0<p<0.5$.
\end{remark}



\section{Numerical Experiments} \label{sec: experiments}

To be able to effectively demonstrate the results of this paper, we will provide two examples in this section. 
\begin{example}\label{support2}
This example shows that under Assumption \ref{advnon}, the $l_1$-norm estimator successfully recovers the true matrix, which supports Theorem \ref{time}. We generate a random matrix whose operator norm is 0.6. 
Whenever the attack times happen with some probability $p<1$,  
the adversary selects each coordinate $w_t^i$ of the disturbance $w_t$  to be $-\text{sgn}(x_t^i) \cdot \gamma$, where $\gamma$ follows $\text{Uniform}[-3,-1]$ with probability 0.5 and $\text{Uniform}[10,20]$ with probability 0.5. The disturbances satisfy the symmetry assumption given in \eqref{alpha}, but indeed have a nonzero mean.


\begin{figure}[t]
     \centering
     \begin{subfigure}[b]{0.235\textwidth}
         \centering
         \includegraphics[width=\textwidth,height=80pt]{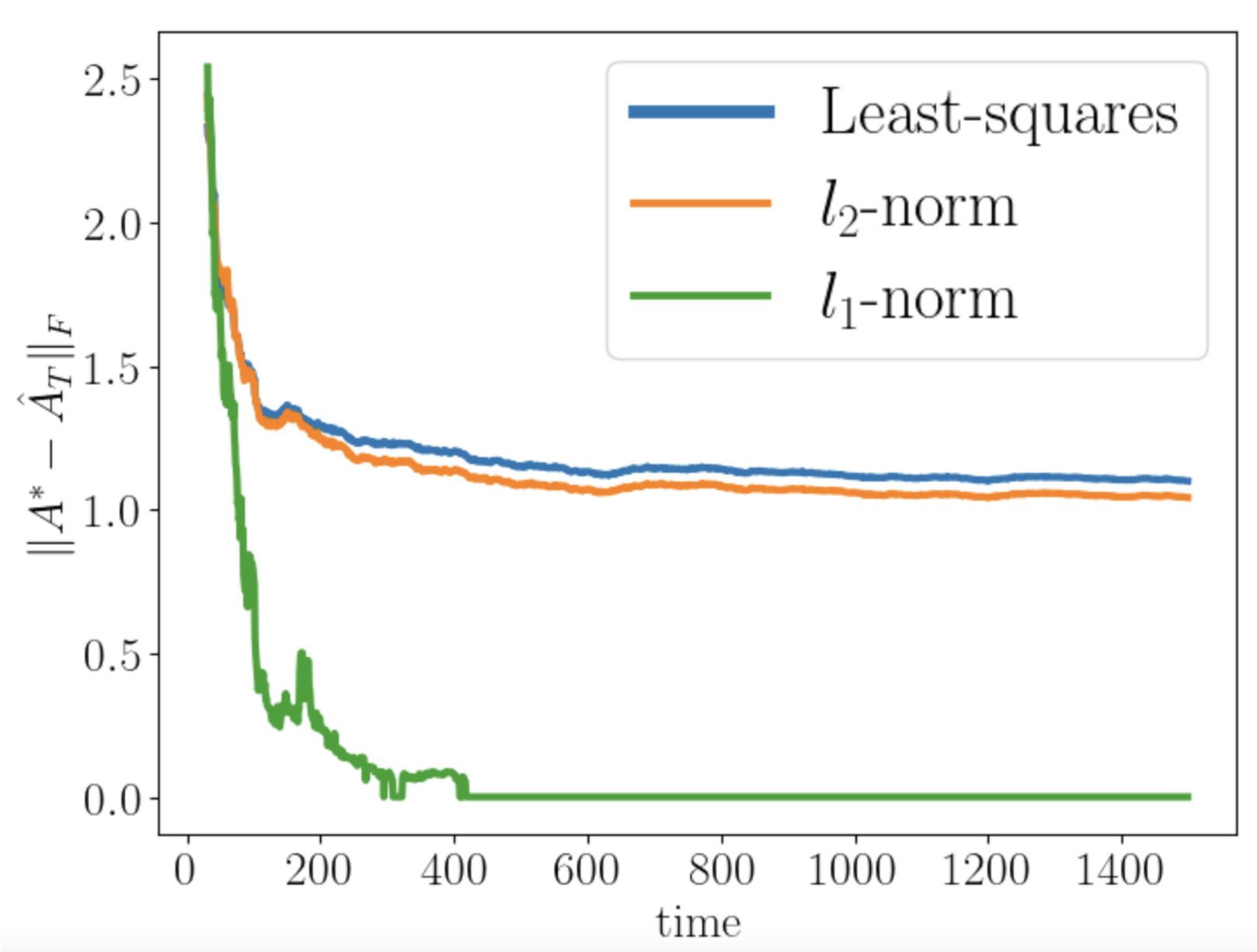}
         \caption{$p=0.7, d=10$}
         \label{dim10p30}
     \end{subfigure}
     \begin{subfigure}[b]{0.235\textwidth}
         \centering
         \includegraphics[width=\textwidth,height=80pt]{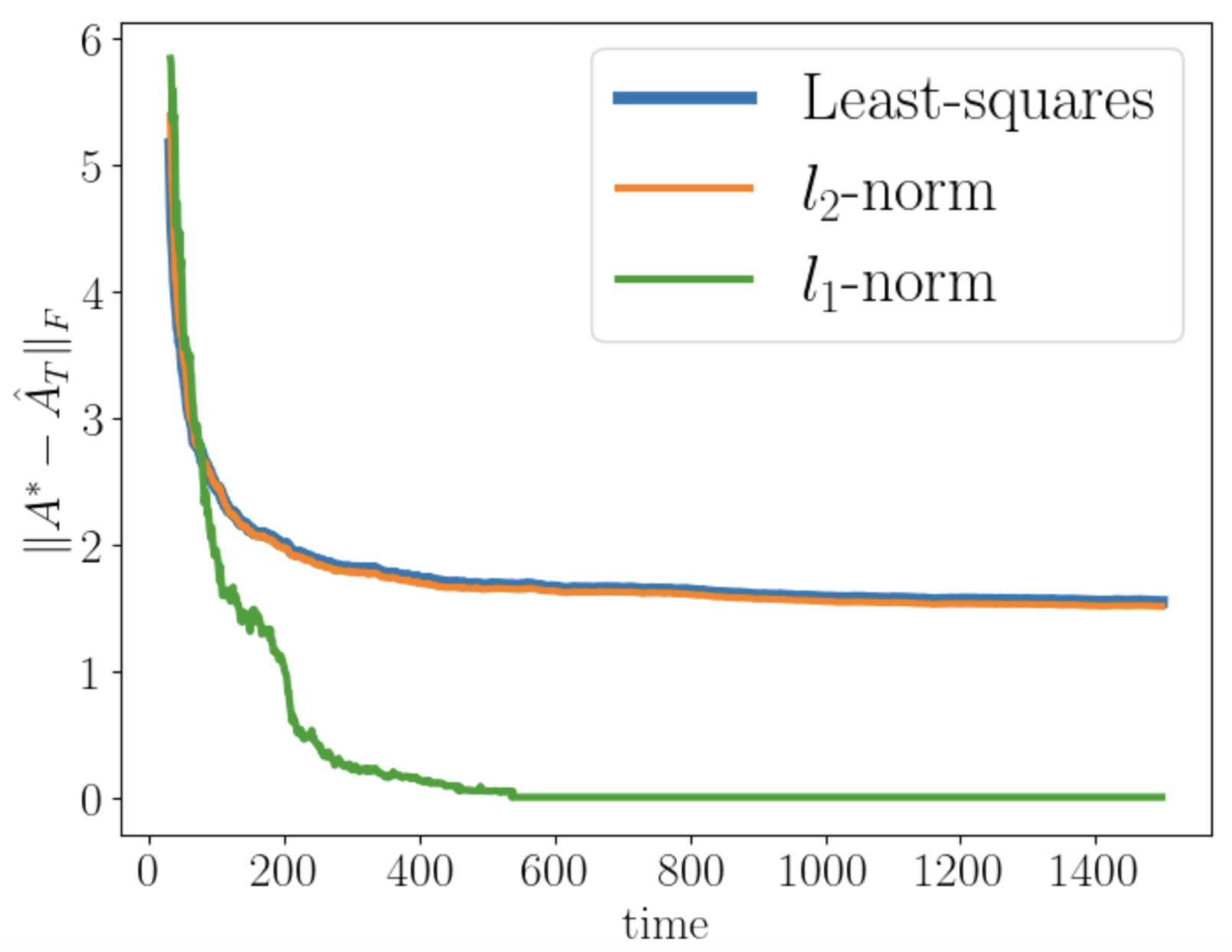}
         \caption{$p=0.7, d=20$}
         \label{dim20p30}
     \end{subfigure}
     \begin{subfigure}[b]{0.235\textwidth}
         \centering
         \includegraphics[width=\textwidth,height=80pt]{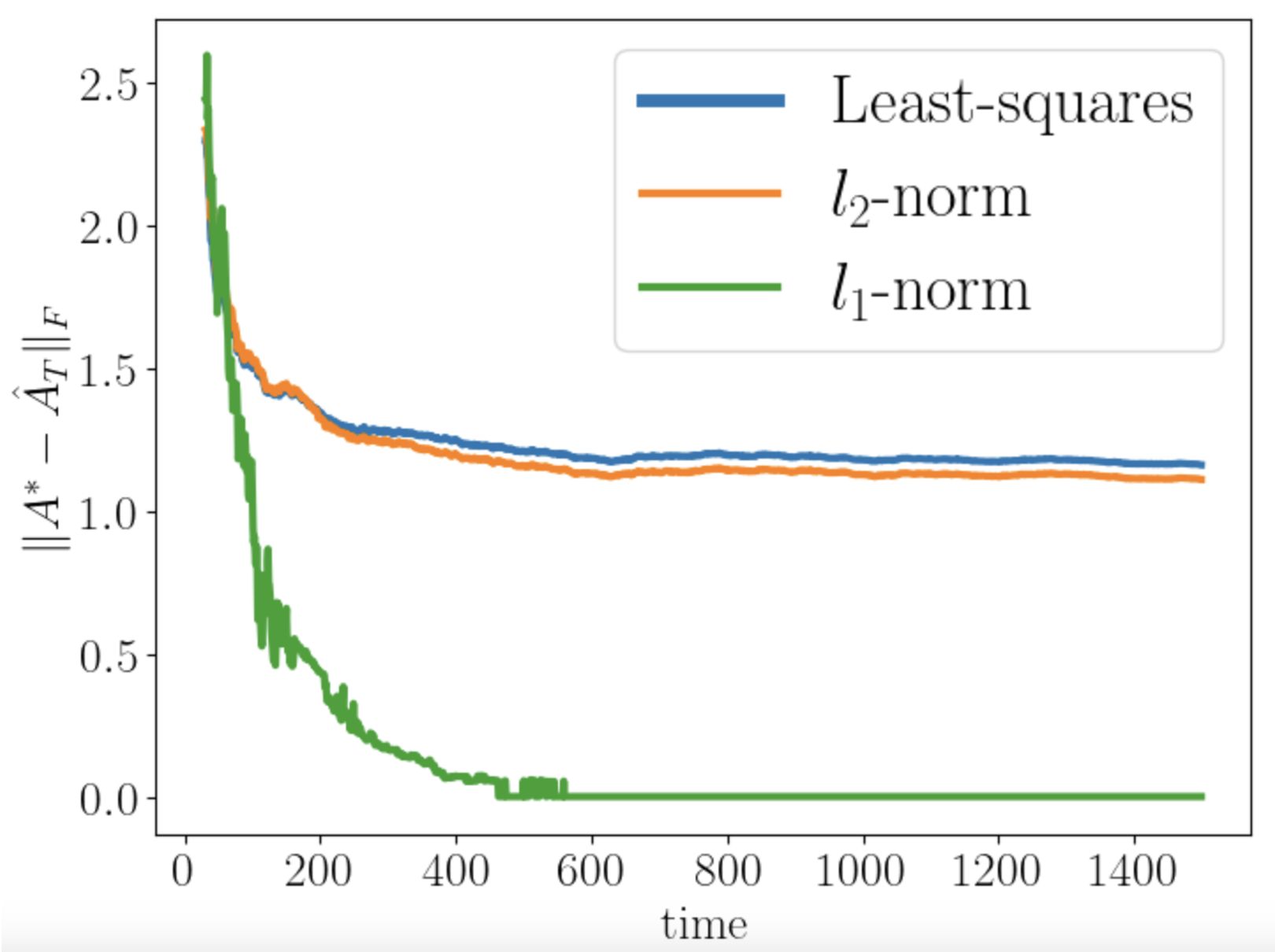}
         \caption{$p=0.75, d=10$}
         \label{dim10p25}
     \end{subfigure}
     \begin{subfigure}[b]{0.235\textwidth}
         \centering
         \includegraphics[width=\textwidth,height=80pt]{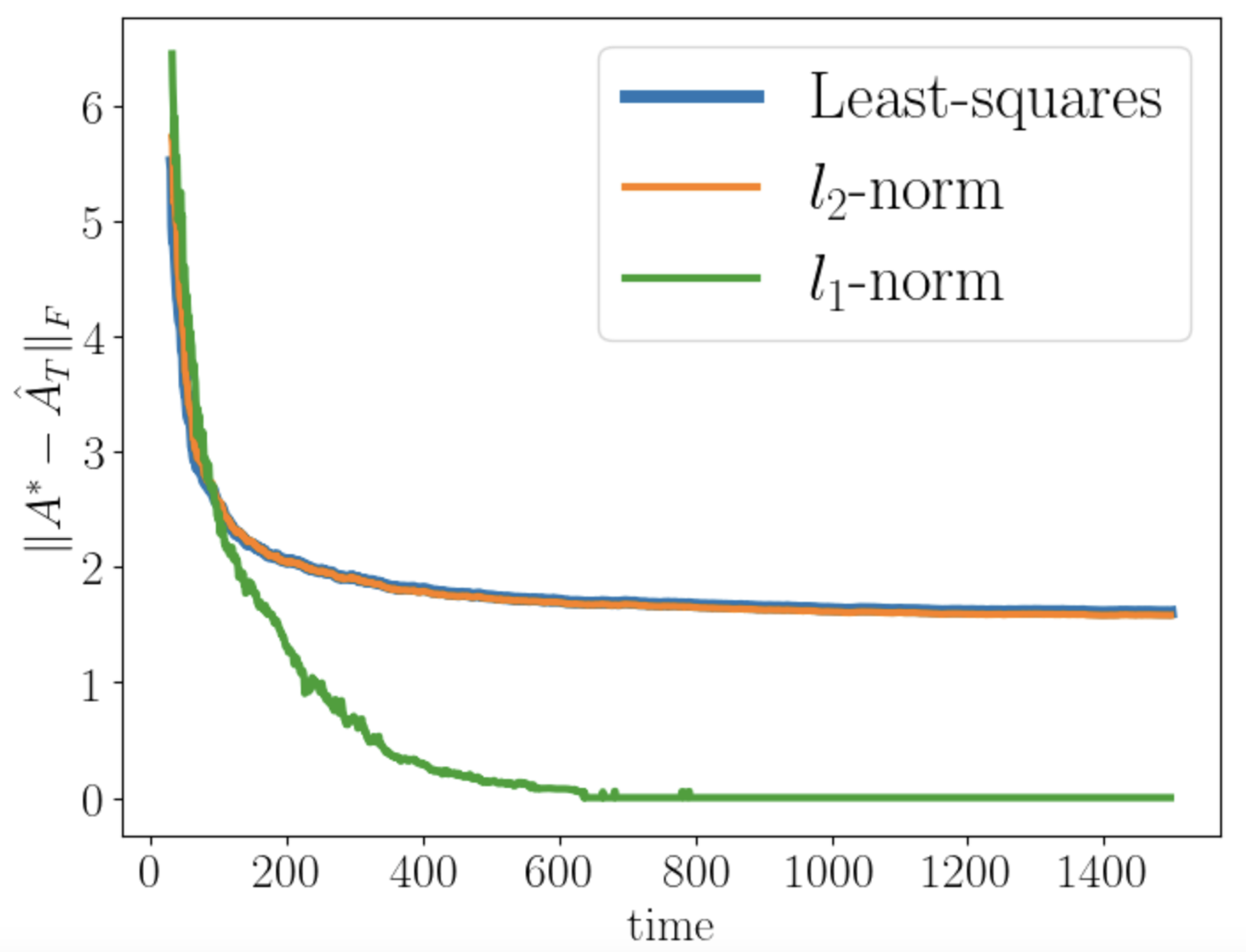}
         \caption{$p=0.75, d=20$}
         \label{dim20p25}
     \end{subfigure}
     \begin{subfigure}[b]{0.235\textwidth}
         \centering
         \includegraphics[width=\textwidth,height=80pt]{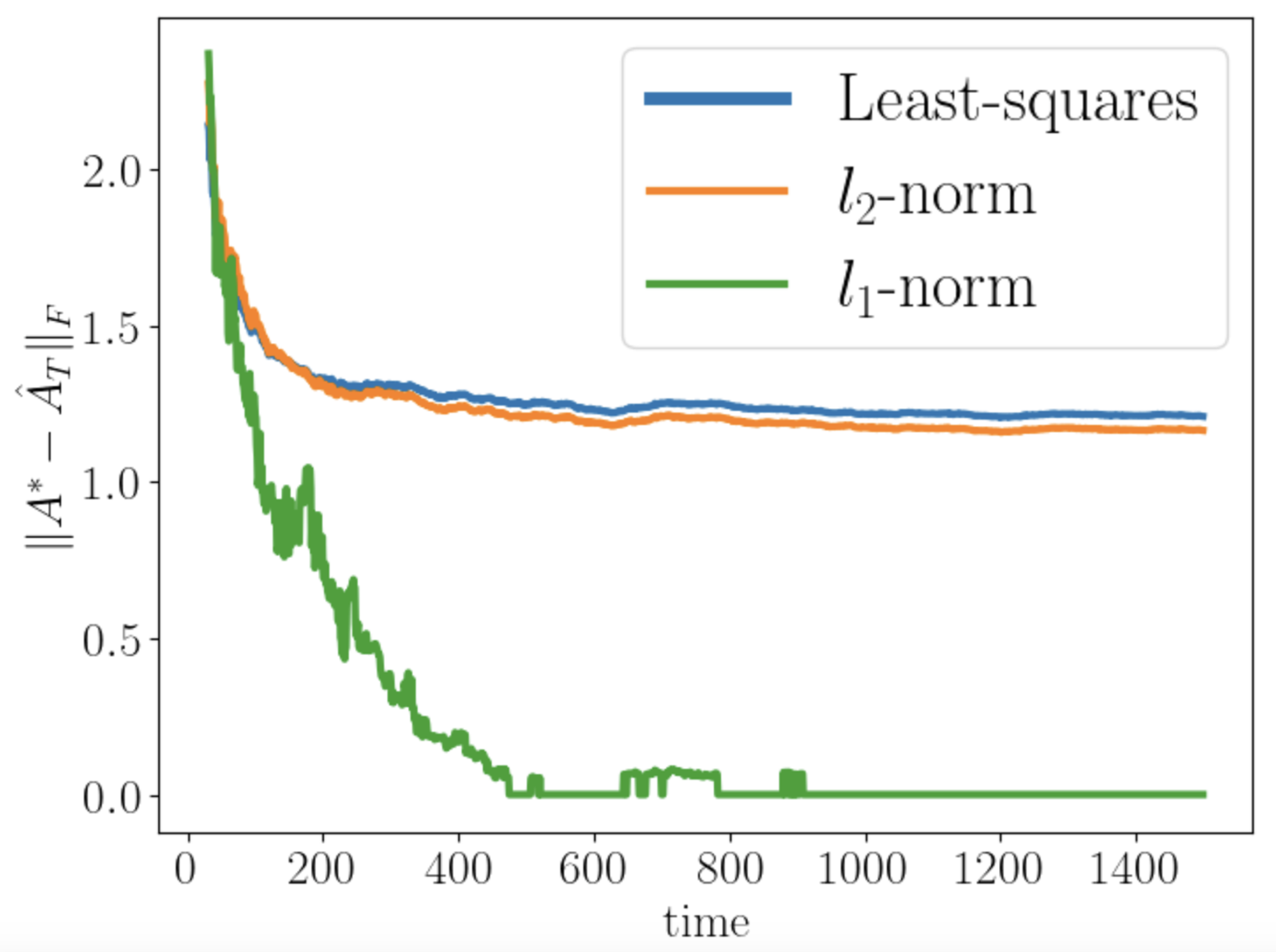}
         \caption{$p=0.8, d=10$}
         \label{dim10p20}
     \end{subfigure}
     \begin{subfigure}[b]{0.235\textwidth}
         \centering
         \includegraphics[width=\textwidth,height=80pt]{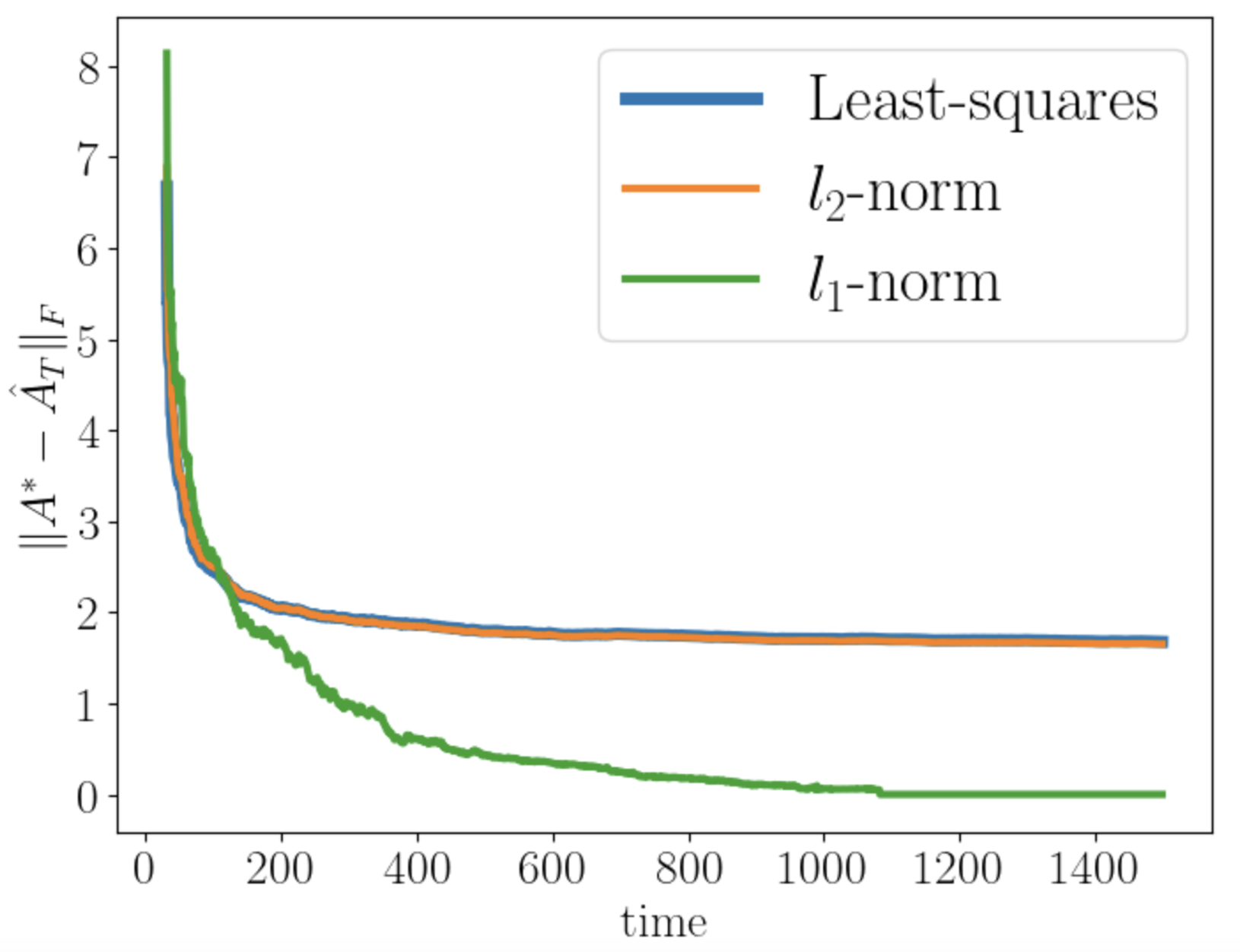}
         \caption{$p=0.8, d=20$}
         \label{dim20p20}
     \end{subfigure}
        \caption{$l_1$-norm estimator vs. other estimators in Example \ref{support2}.}
        \label{ex1}
\end{figure}

In Figure \ref{ex1}, we show the efficacy of the $l_1$-norm estimator compared to the OLS and the $l_2$-norm estimator brought up in Section \ref{intro}. All three estimators involve \textit{convex optimization}, and thus can be efficiently solved by standard optimization solvers. We report the error based on the Frobenius norm of the difference between the estimates and the true matrix. We present various scenarios based on the attack probability $p$ and the dimension $d$. 
While the OLS and $l_2$-norm estimator show a plateau in the error and fail to identify the correct matrix, the $l_1$-norm estimator achieves the exact recovery for all scenarios.
The figures with the same dimension show that as $p$ increases towards 1, indicating more frequent attacks, the exact recovery time also increases. Similarly, the figures with the same $p$ demonstrate that a larger dimension leads to a slower exact recovery. These findings illustrate the recovery time of $\tilde{\Theta}\bigr(\frac{d}{p(1-p)^2}\bigr)$ stated in Theorem \ref{time}.
\end{example}
\begin{figure}[t]
     \centering
     \begin{subfigure}[b]{0.235\textwidth}
         \centering
         \includegraphics[width=\textwidth,height=80pt]{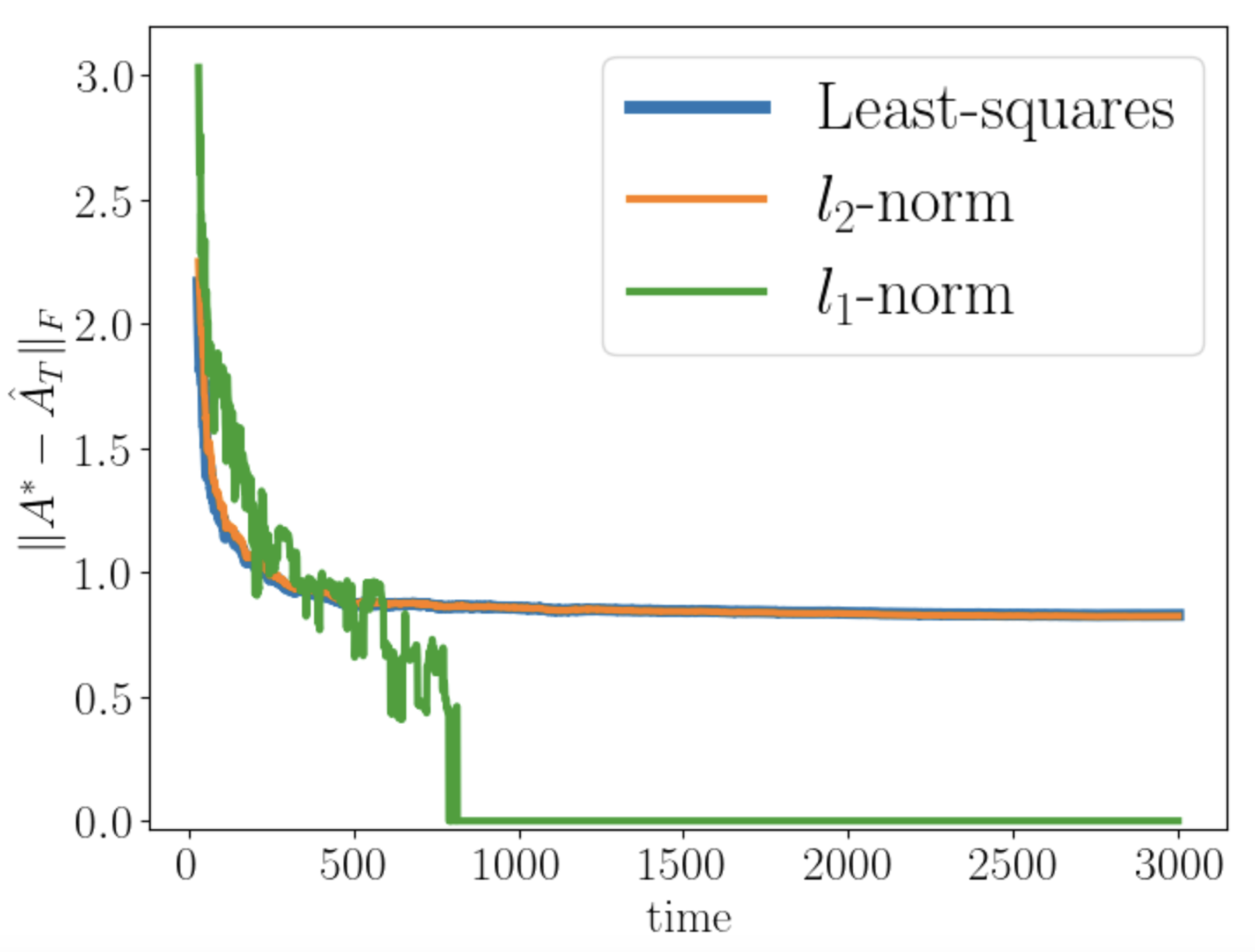}
         \caption{$p=0.45, d=10$}
         \label{p45}
     \end{subfigure}
     \begin{subfigure}[b]{0.235\textwidth}
         \centering
         \includegraphics[width=\textwidth,height=80pt]{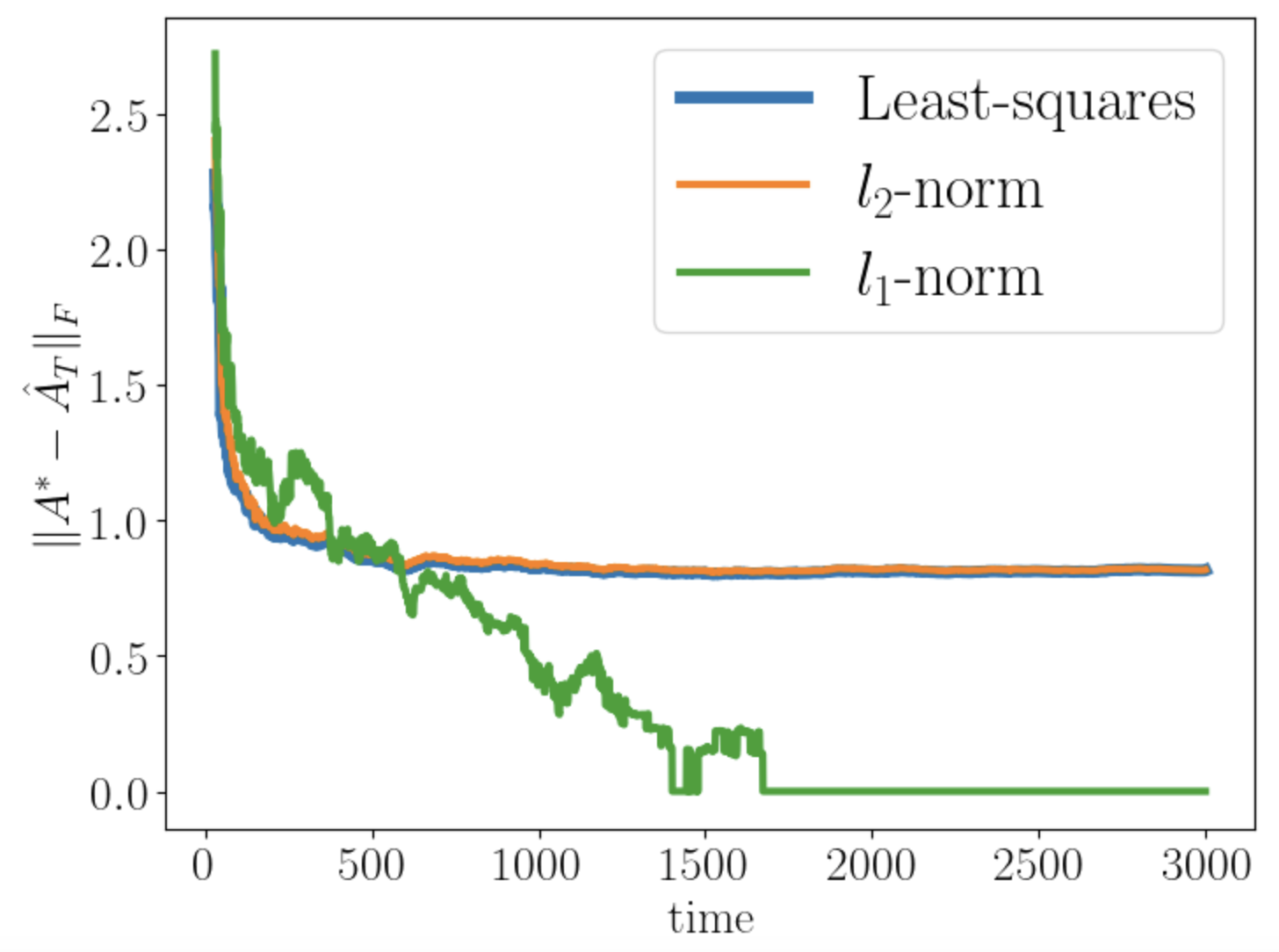}
         \caption{$p=0.47, d=10$}
         \label{p47}
     \end{subfigure}
     \begin{subfigure}[b]{0.235\textwidth}
         \centering
         \includegraphics[width=\textwidth,height=80pt]{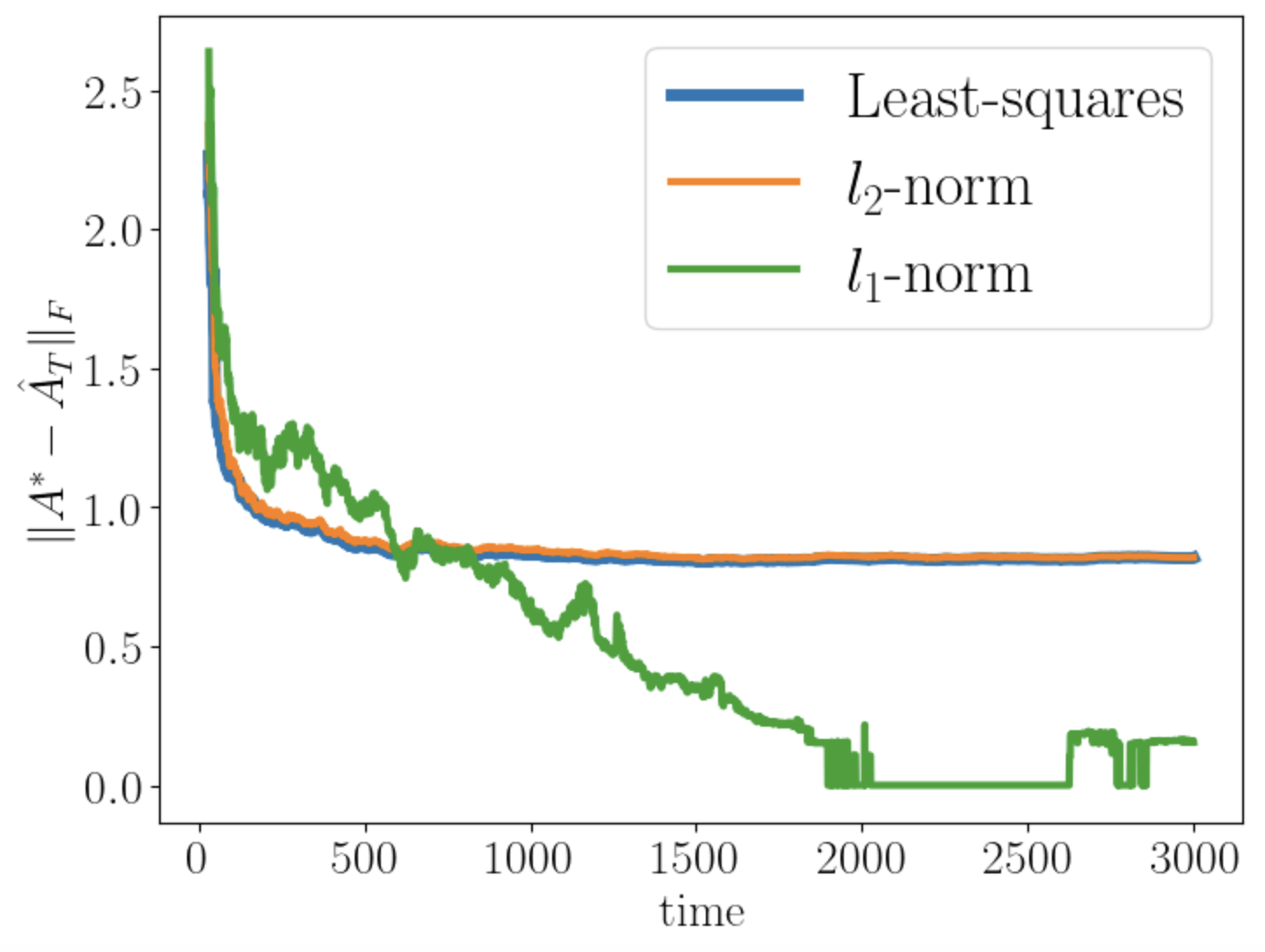}
         \caption{$p=0.48, d=10$}
         \label{p48}
     \end{subfigure}
     \begin{subfigure}[b]{0.235\textwidth}
         \centering
         \includegraphics[width=\textwidth,height=80pt]{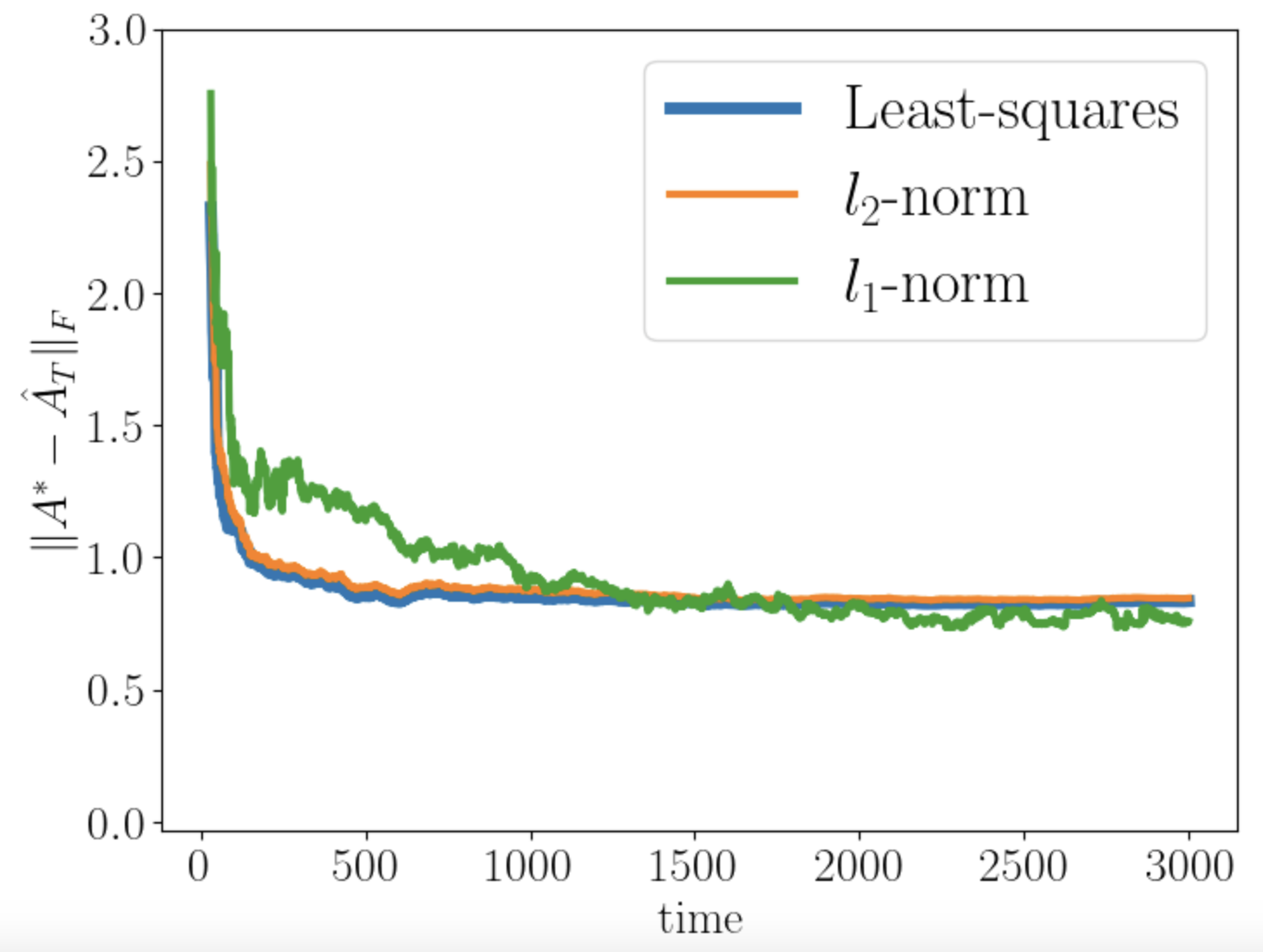}
         \caption{$p=0.5, d=10$}
         \label{p5}
     \end{subfigure}
        \caption{$l_1$-norm estimator vs. other estimators in Example \ref{support3}.}
        \label{ex2}
\end{figure}
\begin{example}\label{support3}
For this example, we generated a random matrix whose operator norm is 0.95. 
We consider the case of completely arbitrary disturbances with the attack probability $p<0.5$, which supports Theorem \ref{pless5}. We choose 
each coordinate $w_t^i$ of the disturbance $w_t$ to be Gaussian with a mean of $100\cdot (\text{sgn}(x_t^i)+2)$ and a variance of 5 when the system is under attack. This adversarial disturbance implies that the attack is almost always positive so as to mislead the estimators towards a positive bias. Figure \ref{ex2} shows that the $l_1$-norm estimator successfully identifies the matrix when $p$ is at most 0.47, while the other estimators again fail to improve beyond a certain level of error. Figures \ref{p45} and \ref{p47} imply that a slight change in $p$ leads to a considerable difference in the exact recovery time. This is because, unlike in Example \ref{support2}, the recovery time is modified to $\tilde{\Theta}\bigr(\frac{d}{p(1-2p)^2}\bigr)$, which is very sensitive near $p=0.5$. Figure \ref{p48} demonstrates this sensitivity as $p$ approaches 0.5, showing that the error alternates between zero and positive values. Finally, when $p$ is exactly 0.5, Figure \ref{p5} indicates that the $l_1$-norm estimator has a plateau in error, similar to that of the other estimators. This aligns with Theorem \ref{pless5}, which only holds for $p<0.5$.
This observation identifies the critical bound of 0.5, below which the $l_1$-norm estimator can ultimately recover the true matrix even in the presence of arbitrarily large noncentral attacks. 
\end{example}

\section{Conclusion} \label{sec: conclusion}
In this paper, we study the capability of the $l_1$-norm estimator to exactly identify the true matrix in the linear system identification problem, where the system suffers from adversarial noncentral disturbances. We show that the true matrix is exactly recovered under a symmetry assumption and the attack probability $p$ being less than 1. Furthermore, if $p<0.5$, the $l_1$-norm estimator prevails against arbitrary adversarial noncentral disturbances and achieves the exact recovery.
This is the first result in the literature showing the possibility of accurate learning of systems under correlated, adversarial, and nonzero-mean disturbances.







\printbibliography

\end{document}